\newcommand{\Z}{\mathbb{Z}}
\newcommand{\Q}{\mathbb{Q}}
\newcommand{\R}{\mathbb{R}}
\newcommand{\N}{\mathbb{N}}
\newcommand{\fdg}{\colon}
\newcommand{\eps}{\varepsilon}
\newcommand{\beistr}{,\quad}
\newcommand{\sqf}{\sqrt{5}}
\newtheorem*{theorem*}{Theorem}
\newtheorem{thm}{Theorem}
\newtheorem{lem}{Lemma}
\newtheorem{cor}{Corollary}
\newtheorem{pro}{Proposition}
\newtheorem{problem}{Problem}
\theoremstyle{remark}
\newcommand{\mybox}[5]{
\node[entity] (#1)[#2] {
 \begin{tabular}[t]{l p{2.3cm}}
   \textbf{#3}	& \\
   parameters: 	& \multicolumn{1}{p{2cm}}{#4} \\
   new bound: 	& #5
\end{tabular}
 };
}
\newcommand{\bIdOa}{293}
\newcommand{\bIdOn}{423}
\newcommand{\mIdIaII}{293}
\newcommand{\bIdIa}{302}
\newcommand{\bIdIn}{435}
\newcommand{\mIIdIm}{423}
\newcommand{\bIIdI}{300}
\newcommand{\mIdIIaII}{293}
\newcommand{\mIdIIaIII}{302}
\newcommand{\bIdIIa}{308}
\newcommand{\bIdIIn}{444}
\newcommand{\mIIdIIm}{435}
\newcommand{\mIIdIIaII}{300}
\newcommand{\bIIdII}{308}
\newcommand{\mIdIIIaII}{293}
\newcommand{\mIdIIIaIII}{302}
\newcommand{\mIdIIIaIV}{308}
\newcommand{\bIdIIIa}{317}
\newcommand{\bIdIIIn}{457}
\newcommand{\mIIdIIIm}{444}
\newcommand{\mIIdIIIaII}{300}
\newcommand{\mIIdIIIaIII}{308}
\newcommand{\bIIdIII}{315}
\newcommand{\mIdIVaII}{293}
\newcommand{\mIdIVaIII}{302}
\newcommand{\mIdIVaIV}{308}
\newcommand{\mIdIVaV}{317}
\newcommand{\bIdIVn}{470}
\newcommand{\mIIdIVm}{457}
\newcommand{\mIIdIVaII}{300}
\newcommand{\mIIdIVaIII}{308}
\newcommand{\mIIdIVaIV}{315}
\newcommand{\bIIdIV}{321}
\begin{document}
\title[On sums of Fibonacci numbers with few binary digits]{On sums of Fibonacci numbers with few binary digits}
\subjclass[2010]{11D61, 11D45, 11B39, 11Y50.} \keywords{Diophantine equations, Exponential Diophantine equations, Fibonacci sequence.}

\author[I. Vukusic]{Ingrid Vukusic}
\address{I. Vukusic,
University of Salzburg,
Hellbrunnerstrasse 34/I,
A-5020 Salzburg, Austria}
\email{ingrid.vukusic\char'100stud.sbg.ac.at} %todo

\author[V. Ziegler]{Volker Ziegler}
\address{V. Ziegler,
University of Salzburg,
Hellbrunnerstrasse 34/I,
A-5020 Salzburg, Austria}
\email{volker.ziegler\char'100sbg.ac.at}

\begin{abstract}
In this paper we completely solve the Diophantine equation $F_n+F_m=2^{a_1}+2^{a_2}+2^{a_3}+2^{a_4}+2^{a_5}$, where $F_k$ denotes the $k$-th Fibonacci number. In addition to complex linear forms in logarithms and the Baker-Davenport reduction method, we use $p$-adic versions of both tools.
\end{abstract}

\maketitle

\section{Introduction}

Sums of Fibonacci numbers with few binary digits have been of quite some interest lately. For instance, Bugeaud, Cipu and Mignotte \cite{Bugeaud_Cipu_Mignotte} determined all Fibonacci numbers with at most four binary digits. Bravo and Luca \cite{Bravo_Luca} found all sums of exactly two Fibonacci numbers which are a perfect power of two. Chim and Ziegler \cite{Chim_Ziegler} resolved the two Diophantine equations
\begin{equation}\label{eq:Chim_Ziegler}
	F_{n_1} + F_{n_2}=2^{a_1}+2^{a_2}+2^{a_3}
	\qquad \mbox{and} \qquad
	F_{n_1} + F_{n_2} + F_{n_3}=2^{a_1}+2^{a_2},
\end{equation}
where $F_k$ denotes the $k$-th Fibonacci number (defined by $F_0=0$, $F_1=1$ and $F_{k+2}=F_{k+1}+F_k$ for $k\geq 0$).
These problems are special cases of the following more general problem: Find all numbers that have few digits with respect to both the Zeckendorf and the binary representation. The second author 
proved a result about linear equations in members of two recurrence sequences \cite{Ziegler} which implies that such problems have only finitely many solutions. Moreover, he combined the above mentioned results and completely solved the problem for the bound $M=5$ of total number of digits.

The standard strategy for solving Diophantine equations like \eqref{eq:Chim_Ziegler} is the iterated application of linear forms in logarithms. It results in a huge upper bound for the largest variable. The huge bound is then reduced by the Baker-Davenport reduction method. 
The challenge is, that the reduction process requires a large number of computations and that the computation time grows exponentially with the number of variables. Using a usual PC and an efficient algorithm, one can solve such equations with up to six variables in a reasonable time. Seven variables would already take about a month's time.
In this paper we show that by using $p$-adic results one can skip one of the steps of the general procedure and thus solve equations with even more variables. 
The $p$-adic results in question are results on lower bounds for linear
forms in $p$-adic logarithms and an idea by Peth\H{o} and de Weger \cite{Pethoe_deWeger}. Pink and Ziegler \cite{Pink_Ziegler} used similar ideas to solve the Diophantine equation
\begin{equation*}
	F_n + F_m 
	=2^{z_1} 3^{z_2} 5^{z_3} \cdots 199^{z_{46}}.
\end{equation*}

We consider the following problem.

\begin{problem}\label{probl:diophantische_gleichung}
Find all $(n,m,a_1,a_2,a_3,a_4,a_5)\in \N_0^7$ with $n-1>m\geq2$ and 
$a_1> a_2 > a_3 > a_4 > a_5 \geq 0$ that solve the Diophantine equation
\begin{equation}\label{eq:main}
	F_n+F_m=2^{a_1}+2^{a_2}+2^{a_3}+2^{a_4}+2^{a_5}.
\end{equation}
\end{problem}

Thus the aim of this paper is to prove the following main theorem.

\begin{thm}\label{thm:main}
There are exactly 38 solutions $(n,m,a_1,a_2,a_3,a_4,a_5)\in \N_0^7$ to  Problem~\ref{probl:diophantische_gleichung}.
All solutions fulfill $n\leq 23$ and $a_1\leq 14$. A list of solutions is given in the Appendix.
\end{thm}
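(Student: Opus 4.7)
The plan is to attack the equation by an iterated Baker method, but with the crucial twist that the last application of a lower bound for linear forms in logarithms is replaced by a $2$-adic version. Throughout I write $\alpha = (1+\sqf)/2$ and $\beta = (1-\sqf)/2$ so that Binet's formula gives $F_k = (\alpha^k - \beta^k)/\sqf$. Since $|\beta| < 1$, comparing the dominant terms of the two sides of \eqref{eq:main} immediately yields the heuristic relation $n \log\alpha \approx a_1 \log 2$, and in particular an effective linear bound $n \leq c_0\, a_1$. I henceforth assume $n$ is large and set $H := \max(n, a_1)$.

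\textbf{Iterated real linear forms in logarithms.} For $i=1,2,3$, I rearrange \eqref{eq:main} to isolate $\alpha^n/\sqf$ against the first $i$ powers of $2$, obtaining a linear form
\[
\Lambda_i = n\log\alpha - a_i\log 2 + \log \gamma_i,
\]
where $\gamma_i$ is an explicit algebraic number supported on $\{2,\sqf\}$, together with an exponentially small upper bound
\[
|\Lambda_i| \leq c_i \cdot \max\bigl(2^{a_{i+1}-a_i},\; \alpha^{m-n},\; \alpha^{-n}\bigr).
\]
Matveev's theorem then gives a matching lower bound $|\Lambda_i| \geq \exp(-C_i\log H)$. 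Combining the two bounds produces explicit upper bounds on $a_i-a_{i+1}$ and on $n-m$, each polynomial in $\log H$ (with large but effective constants). A parallel argument pairing $\alpha^n$ with $\alpha^m$ covers the case when $n-m$ is small.

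\textbf{The $2$-adic step.} At this point the naive real iteration would be applied one more time to a linear form $\Lambda_4$; instead I work $2$-adically. Rearranging \eqref{eq:main} as
\[
F_n + F_m - 2^{a_1} - 2^{a_2} - 2^{a_3} - 2^{a_4} = 2^{a_5},
\]
I see that the $2$-adic valuation of the left-hand side is exactly $a_5$. Working in the quadratic extension $\Q_2(\sqf)$, in which $\alpha$ and $\beta$ are $2$-adic units, I expand $F_n+F_m$ and build a linear combination of $2$-adic logarithms (of $\alpha$, of $1+\alpha^{m-n}$, and of $2$) whose $2$-adic valuation is at least $a_5$. Yu's lower bound for linear forms in $p$-adic logarithms then gives an upper bound of the form $a_5 \leq C\log H$. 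Combined with the bounds from the previous step, this produces an absolute, explicit upper bound on $H$.

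\textbf{Reduction and final search.} The resulting bound on $H$ is huge (of order $10^{40}$ or more) but finite and explicit. A Baker--Davenport reduction (via LLL or continued fractions) applied to each of $\Lambda_1, \Lambda_2, \Lambda_3$, together with the $p$-adic variant in the spirit of Peth\H{o}--de Weger applied to the $2$-adic linear form, collectively drive the upper bound down through several passes, until it reaches a size feasible for direct enumeration, yielding the $38$ solutions listed in the Appendix. The main obstacle is the $2$-adic step: the correct choice of auxiliary algebraic numbers and the verification that the resulting linear form is nonzero (otherwise one would extract parasitic identities among Fibonacci numbers and dyadic integers) both require care. A secondary difficulty is preventing each reduction loop from blowing up combinatorially in $m$, $a_2$, $a_3$, $a_4$, $a_5$.
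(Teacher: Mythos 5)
Your architecture has a genuine gap: after the real steps $\Lambda_1,\Lambda_2,\Lambda_3$ you control $n-m$ and $a_1-a_4$, and after the $2$-adic step you control $a_5$, but you never control $a_4-a_5$, and these three pieces of information do not bound $a_1$. Nothing you have derived excludes, say, $a_5=0$ with $a_4$ (and hence $a_3,a_2,a_1$, which sit at bounded distance above it) astronomically large: the $2$-adic valuation argument sees only $v_2(F_n+F_m)=a_5$, the \emph{smallest} exponent, and is completely blind to where $a_4$ sits. The real linear form $\Lambda_4$ that you propose to skip --- isolating $\alpha^n(1+\alpha^{m-n})/\sqf$ against $2^{a_1}+\cdots+2^{a_4}$, with smallness $\ll\max\{\alpha^{-(n-m)},2^{-(a_1-a_5)}\}$ --- is precisely the step that bounds $a_1-a_5$, and it cannot be dropped. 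What the $p$-adic method can legitimately replace is the step \emph{after} that: the final real linear form whose smallness is $\approx\alpha^{-n}$ and whose Baker--Davenport reduction would have to range over all five bounded differences $(n-m,a_1-a_2,\dots,a_1-a_5)$ simultaneously (this is where the combinatorial explosion lives). The correct shape of the argument is therefore: run \emph{all five} real iterations to bound $n-m$ and $a_1-a_2,\dots,a_1-a_5$ by a power of $\log n$, then use a $2$-adic lower bound to get $a_5\ll(\log n)^2(n-m)$, and conclude from $a_1=a_5+(a_1-a_5)$; the $p$-adic reduction step then depends on the single parameter $n-m$, which is the whole computational payoff.

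Two secondary points. First, the claimed bound $a_5\le C\log H$ with an absolute constant cannot come out of Yu's (or any) $p$-adic lower bound here, because one of your logarithms is $\log_2(1+\alpha^{m-n})$, whose height grows linearly in $n-m$; the honest output is $a_5\ll(n-m)\log H$ (or $(n-m)(\log H)^2$ with a two-logarithm estimate \`a la Bugeaud--Laurent). This is harmless once $n-m$ is already polylogarithmically bounded, but it changes the constants and the final exponent of $\log n$. Second, the nondegeneracy you flag is not a single exceptional identity but an infinite family: in the two-logarithm formulation the numbers $\alpha/\beta$ and $(\beta^{n-m}+1)/(\alpha^{n-m}+1)$ are multiplicatively dependent for \emph{every even} $n-m$ and for $n-m=3$, so these cases must be disposed of by a direct elementary computation of $v_2(\alpha^{N}\pm1)$ rather than by the transcendence estimate.
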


\section{Outline of the proof}\label{sec:outline}

We start by finding all solutions of Problem~\ref{probl:diophantische_gleichung} with $n\leq 1000$ by a brute force search. 
Then we show that there exist no solutions with $n>1000$: 

In Section \ref{sec:largeUpperBound} we use results on linear forms in logarithms to obtain a large upper bound for $n$. In fact, applying Matveev's Theorem \cite{Matveev} repeatedly, we obtain bounds of the form 
\[
	a_1-a_5 \ll (\log n)^5
	\qquad \mbox{and} \qquad
	n-m \ll (\log n)^5.
\]
Then we use a theorem of Bugeaud-Laurent \cite{Bugeaud_Laurent} to obtain an upper bound for $a_5$ of the form 
\[
	a_5 \ll (\log n)^2 \cdot (n-m).
\]
Combining these results we obtain an inequality of the form
\[
	n \ll a_1 =  a_5 + (a_1-a_5) \ll (\log n)^7 + (\log n)^5 	\ll
	(\log n)^7,
\]
which implies a (huge) upper bound for $n$.

In Section \ref{sec:reducing_bound} we reduce this huge bound.
Applying the Baker-Davenport reduction method we reduce the bounds for $a_1-a_5$ and $n-m$. Having a small bound for $n-m$ we can use a method due to Peth\H{o} and de Weger \cite{Pethoe_deWeger} that gives us a small bound for $a_5$.
Thus we have a small bound for $a_1=a_5 + (a_1-a_5)$ which immediately gives a small bound for $n$.

\section{Preliminaries}

First, recall that the Binet formula
\[
	F_n=\frac{\alpha^n-\beta^n}{\sqrt{5}} 
\]
holds for all $n \geq 0$, where
\[
	\alpha := \frac{1+\sqrt{5}}{2} , \quad
	\beta := \frac{1-\sqrt{5}}{2}.
\]
Next, note that $\alpha$ and $\beta$ have the following basic property
\begin{equation*}
	\alpha^{-1}=-\beta.
\end{equation*}

Moreover, the inequality
\begin{equation}\label{eq:inequ_alpha_Fib}
	\alpha^{n-2}\leq F_n \leq \alpha^{n-1}
\end{equation}
holds for all $n\geq 1$. 

Before we present some results about linear forms in logarithms, we recall a definition and some facts about logarithmic heights. Let $\gamma$ be an algebraic number of degree $d\geq 1$ with the minimal polynomial
\[
	a_d X^d + \dots +a_1 X + a_0 
	= a_d \prod _{i=1}^{d} (X- \gamma_i),
\]
where $a_0, \dots, a_d$ are relatively prime integers and $\gamma_1, \dots, \gamma_d$ are the conjugates of $\gamma$. Then the \textit{logarithmic height} of $\gamma$ is defined by
\[
	h(\gamma)
	:= \frac{1}{d} \left(
		\log |a_d|
		+ \sum_{i=1}^d \log \left( \max \{ 1,|\gamma_i|\} \right)
		\right).
\]
For any algebraic numbers $\gamma_1, \dots, \gamma_n, \gamma$ and $l\in \Z$ we have the following well-known properties.
	\begin{itemize}
	\item $h(\gamma_1 \cdots \gamma_n)\leq h(\gamma_1) + \dots + h(\gamma_n)$.
	\item $h(\gamma_1+\dots + \gamma_n)\leq  h(\gamma_1) + \dots +h(\gamma_n) + \log n$.
	\item $h(\gamma^l)=|l| h(\gamma)$.
	\end{itemize}

At the present time, the most widely used estimate for linear forms in complex logarithms is due to Matveev \cite{Matveev}. 
One of its consequences is the following \cite[Thm. 9.4]{Matveev_Folgerung}.

\begin{thm}[Matveev]\label{thm:matveev}
 Let $\gamma_1,\dots , \gamma_t$ be positive real algebraic numbers in a real number field $K$ of degree $D$, let $b_1,\dots,b_t \in \Z$, and let
\[
	\Lambda:= \gamma_1^{b_1}\cdots \gamma_t^{b_t}-1
\]
be non-zero. Then
\[
	|\Lambda|>\exp \left(
		-1.4 \cdot 30^{t+3} \cdot 4^{4.5}\cdot D^{t+2}
		(1+\log D)(1+\log B) A_1 \cdots A_t
	\right),
\]
where
\[
	B\geq \max \left\{|b_1|,\dots,|b_t|\right\}
\]
and
\[
	A_i\geq \max\left\{h(\gamma_i),\frac{|\log \gamma_i|}{D},\frac{0.16}{D}\right\}
	\quad
	\mbox{for} \quad i=1,\dots,t.
\]
\end{thm}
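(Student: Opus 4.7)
The plan is to reproduce the Baker--Gelfond--Matveev method for lower bounds of linear forms in logarithms, with Matveev's specific refinements that give the exponential dependence $30^{t+3}$ on the number of variables. Passing from $|\Lambda|$ to $|\Lambda'|:=|b_1 \log \gamma_1 + \cdots + b_t \log \gamma_t|$ via $|\log(1+x)| \asymp |x|$ for small $x$, the task is: assuming $|\Lambda'|$ is smaller than the claimed threshold, derive a contradiction via an auxiliary function argument. First I would fix interpolation parameters $L_0, L_1, \ldots, L_t \in \N$ and an extrapolation depth, as functions of the $A_i$, $D$, $B$, and $t$, to be optimized at the end; the shape of the final constant $1.4\cdot 30^{t+3}\cdot 4^{4.5}\cdot D^{t+2}$ is dictated by this optimization.

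The core technical step I would carry out is a Kummer descent: reduce to a situation in which the numbers $\gamma_1,\ldots,\gamma_t$ are, in a suitable quantitative sense, multiplicatively independent modulo a known sub-lattice of $\Z^t$. This descent (using roots of the $\gamma_i$ in a controlled extension of $K$) is what allows the dependence on $t$ to collapse from the poor bounds of the classical Baker--Feldman reduction to the single-exponential $30^{t+3}$. Having reduced to this case, I would build Laurent's interpolation determinant: an $N\times N$ matrix whose $(i,j)$-entry is $\gamma_1^{\lambda_{1,i} s_j}\cdots \gamma_t^{\lambda_{t,i} s_j}$, where $(\lambda_{1,i},\ldots,\lambda_{t,i})$ runs over a box of size $L_1\times\cdots\times L_t$ and $s_j$ over $\{0,1,\ldots,L_0-1\}$. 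Let $\Delta$ be its determinant. On the analytic side, substituting $\gamma_1^{b_1}\cdots\gamma_t^{b_t} = 1+\Lambda$ into one of the columns and applying the Schwarz--Hermite interpolation lemma shows $|\Delta|$ is exponentially small (depending on how small $|\Lambda'|$ is assumed to be). On the arithmetic side, after multiplying by a common denominator depending on $A_1,\ldots,A_t$ and the heights of conjugates, $\Delta$ becomes an algebraic integer, so either it vanishes or each of its conjugates contributes a $\mathrm{house}(\Delta)^{\le D}$ estimate yielding $|\Delta|$ bounded below.

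To rule out the vanishing alternative I would invoke a zeros estimate in the style of Philippon/Masser--W\"ustholz on multiplicative algebraic tori: under the multiplicative independence secured by the Kummer descent, no non-trivial polynomial of controlled degree vanishes on the relevant orbit, which forces $\Delta\ne 0$. Comparing the analytic upper bound with the arithmetic lower bound then produces the desired contradiction and delivers the inequality for $|\Lambda|$ after unwinding $|\log(1+\Lambda)|$. The final contradictory inequality is only genuinely sharp after delicate balancing of $L_0,L_1,\ldots,L_t$ against the exponents appearing in both bounds, which is where the explicit numerical constants $1.4$, $30^{t+3}$, $4^{4.5}$, and the exponent $t+2$ on $D$ are extracted. \textbf{The main obstacle} is precisely this last step: each gain in the constants requires a coordinated sharpening of Kummer descent (to control the loss from passing to a root-extension), of the zero estimate (to permit a larger interpolation box), and of the analytic estimate (to squeeze out factors of $\log D$ and $\log B$). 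This is the reason Matveev's proof runs to many pages of technical optimization, and why in the present paper the theorem is used only as a black box.
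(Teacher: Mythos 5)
This statement is not proved in the paper at all: it is quoted as a known black-box result, attributed to Matveev and taken in the reformulation of \cite[Thm.~9.4]{Matveev_Folgerung}. So there is no ``paper's own proof'' to compare against, and the only question is whether your blind attempt constitutes a proof. It does not. What you have written is an accurate prose map of the architecture of Matveev's published argument --- passage from $\Lambda$ to the linear form $b_1\log\gamma_1+\cdots+b_t\log\gamma_t$, a quantitative Kummer-type descent to enforce strong multiplicative independence, Laurent-style interpolation determinants with an analytic upper bound versus an arithmetic (Liouville) lower bound, a zero estimate on the multiplicative torus to exclude vanishing, and a final optimization of the interpolation parameters. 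Naming these ingredients is not the same as supplying them: every quantitative claim in the statement --- the factor $1.4\cdot 30^{t+3}$, the power of $D$, the single factor $(1+\log B)$ rather than a power of $\log B$, the product $A_1\cdots A_t$ --- lives entirely inside the steps you defer, and you yourself flag the ``delicate balancing'' as the main obstacle without carrying out any of it. A referee could not check a single inequality in your write-up.

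Concretely, the gaps are: (i) the Kummer descent is asserted but no extension degree, no index of the sublattice, and no resulting loss in the height bounds is computed, and this is exactly where the base $30$ (as opposed to some uncontrolled function of $t$) is won; (ii) the zero estimate is invoked generically (``in the style of Philippon/Masser--W\"ustholz'') without the explicit degree/multiplicity bookkeeping that determines the admissible size of the interpolation box; (iii) no choice of $L_0,\dots,L_t$ is made, so the ``contradiction'' at the end cannot even be stated as an inequality. For the purposes of this paper the correct course of action is the one the authors take: cite the result and use it as given. If you wanted to go further, the honest options are either to reproduce the full argument from Matveev's two papers (dozens of pages) or to prove a qualitatively similar but numerically weaker bound by the classical Baker--W\"ustholz method, which would still suffice for Proposition~\ref{pro:Matveev} after adjusting the constants.
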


As for linear forms in $p$-adic logarithms, we will use a result for only two linear forms in logarithms by Bugeaud and Laurent \cite[Cor. 1]{Bugeaud_Laurent}.

\begin{thm}[Bugeaud-Laurent]\label{thm:bugeaud-laurent}
	Let $p$ be a prime number,  
$\gamma_1, \gamma_2$
mulitiplicatively independent algebraic numbers with $v_p(\gamma_1)=v_p(\gamma_2)=0$ and let $b_1,b_2 \in \N$. Then
\[
	v_p(\gamma_1^{b_1}-\gamma_2^{b_2})
	\leq
	\frac{24p(p^f-1)}{(p-1)(\log p)^4}D^4
	B^2
	A_1 A_2,
\]
where
\[
	B=
	\max \left\{
		\log b' + \log \log p + 0.4, 
		\frac{10 \log p}{D},
		10
	\right\},
\]
the number $f$ is the residue class degree of the extension $\Q_p(\gamma_1,\gamma_2)/\Q_p$, $D:=\frac{[\Q_p(\gamma_1,\gamma_2):\Q_p]}{f}$, $A_1,A_2$ are real numbers such that
\[
	 A_i \geq \max \left\{h(\gamma_i),		\frac{\log p}{D}\right\} \quad \mbox{for} \quad i=1,2
\]
and
\[
	b':=\frac{b_1}{D A_2}+\frac{b_2}{D A_1}.
\]
\end{thm}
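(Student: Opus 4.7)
The plan is to follow the method of \emph{interpolation determinants} developed by M.~Laurent and adapted to the $p$-adic setting by Bugeaud and Laurent. The proof would proceed by contradiction: assume $v := v_p(\gamma_1^{b_1} - \gamma_2^{b_2})$ exceeds the claimed bound, and derive a contradiction from two incompatible estimates on a single auxiliary determinant. The first task is to choose integer parameters $L_1, L_2, S$ of sizes depending appropriately on $A_1, A_2, B, D$ and $1/\log p$, and to form the $N \times N$ matrix $M$ (with $N = L_1 L_2$) whose $((l_1, l_2), s)$-entry is a monomial such as
$$\binom{b_1 l_1 + b_2 l_2}{s}\,\gamma_1^{l_1 s}\,\gamma_2^{l_2 s},$$
indexed by $0 \le l_i < L_i$ and $0 \le s < N$. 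Set $\Delta := \det M$.

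For the upper bound on $v_p(\Delta)$: the hypothesis $\gamma_1^{b_1} \equiv \gamma_2^{b_2} \pmod{\mathfrak{p}^v}$ (for the prime $\mathfrak{p}$ of $\Q_p(\gamma_1,\gamma_2)$ above $p$) forces a large block of $M$ to collapse modulo $\mathfrak{p}^v$ after exploiting congruences among the exponents $l_i s$, while the binomial-coefficient factors provide an additional multiplicity gain via $p$-adic extrapolation. This yields a bound roughly of shape $v_p(\Delta) \gg v \cdot L_1 L_2 S$ (with an extra factor $f/(p-1)$ coming from the normalization of $v_p$ on $\Q_p(\gamma_1,\gamma_2)$). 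For the lower bound on $v_p(\Delta)$: after clearing bounded denominators, $\Delta$ is an algebraic number in $K = \Q(\gamma_1,\gamma_2)$. One bounds each archimedean conjugate by $|\sigma(\Delta)| \le N!\,\prod_i \max(1,|\sigma(\gamma_i)|)^{L_i S}$. Provided $\Delta \ne 0$, the product formula converts these archimedean majorants and the denominator bounds into an inequality
$$v_p(\Delta) \,\log p \;\ll\; L_1 L_2 S\bigl(L_1 h(\gamma_1) + L_2 h(\gamma_2)\bigr) + \log N!.$$

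The non-vanishing of $\Delta$ must be verified separately via a Vandermonde-type computation: a vanishing determinant would yield a non-trivial algebraic relation among the monomials $\gamma_1^{l_1}\gamma_2^{l_2}$, contradicting the multiplicative independence of $\gamma_1$ and $\gamma_2$. Comparing the two estimates and optimizing the ratios $L_1 : L_2 : S$ (with $L_i$ essentially proportional to $A_{3-i}^{-1}$ and $S$ proportional to $B$) then forces
$$v \;\ll\; \frac{p(p^f - 1)}{(p - 1)(\log p)^4}\,D^4 B^2\, A_1 A_2,$$
which, after careful bookkeeping, produces the explicit constant $24$ and the refined form of $b'$ appearing in the statement. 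The main obstacle is obtaining the quadratic exponent of $B$, which requires a Schnirelmann-integral refinement (or equivalently a Philippon–Waldschmidt-type extrapolation) of the crude archimedean estimate, rather than a direct bound using $|\det M| \le N! \max|\text{entry}|^N$. A secondary difficulty is the precise tracking of the factor $p(p^f-1)/(p-1)$, which encodes both the ramification and residue-class degree of the extension $\Q_p(\gamma_1,\gamma_2)/\Q_p$ and thereby governs how sharp the $p$-adic collapse of $M$ really is.
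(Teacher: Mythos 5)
This statement is not proved in the paper at all: it is quoted verbatim (as Corollary~1 of Bugeaud--Laurent) and used as a black box, exactly as Matveev's theorem is. So there is no in-paper proof to compare your attempt against, and for the purposes of this paper a citation is all that is required.

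Taken on its own terms, your outline does identify the method that Bugeaud and Laurent actually use -- Laurent's interpolation determinants transported to the $p$-adic setting, with an ``analytic'' estimate showing $v_p(\Delta)$ is large when $\gamma_1^{b_1}\equiv\gamma_2^{b_2}$ modulo a high power of $\mathfrak{p}$, an ``arithmetic'' estimate via the product formula and height bounds showing it cannot be too large unless $\Delta=0$, and a zero estimate ruling out $\Delta=0$ from multiplicative independence. But as written it is a roadmap, not a proof: every quantitative step is left at the level of ``roughly of shape'' or ``after careful bookkeeping,'' and the two genuinely hard points are only named. First, the non-vanishing of $\Delta$ is not a routine Vandermonde computation; it requires a genuine zero estimate for the monomials $\binom{\cdot}{s}\gamma_1^{l_1 s}\gamma_2^{l_2 s}$, and this is where multiplicative independence enters in a nontrivial way. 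Second, the specific features that make the theorem usable here -- the constant $24$, the exact factor $p(p^f-1)/(p-1)$, the $B^2$ dependence with $B$ built from $b'=b_1/(DA_2)+b_2/(DA_1)$ -- all live in the parameter optimization you defer. A small but real slip: you label your two determinant estimates ``upper bound'' and ``lower bound'' in the opposite sense to the inequalities you then write ($v_p(\Delta)\gg\cdots$ is a lower bound, $v_p(\Delta)\log p\ll\cdots$ an upper bound); the displayed inequalities point the right way, but the labels should be swapped. None of this affects the paper, which correctly treats the theorem as an external input.
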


In order to apply Theorem~\ref{thm:bugeaud-laurent} we will have to do some computations in $\Z[\frac{1+\sqrt{5}}{2}]$. We will use the following results.

\begin{lem}\label{lem:v2_q_hoch_x_plus1}
	Let $q \in \Z[\frac{1+\sqrt{5}}{2}] $. Then for any $x\in \N$ we have
	\[
	v_2(q^x+1) \in \left\{0,1,v_2(q+1),v_2(q^3+1)\right\}.
\] 
\end{lem}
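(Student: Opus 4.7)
The plan is to exploit that $2$ is inert in $\ord := \Z[\alpha] = \Z[\frac{1+\sqrt{5}}{2}]$, the ring of integers of $\Q(\sqrt{5})$ (since $5 \equiv 5 \pmod 8$), so the residue field $\ord/(2)$ is isomorphic to $\FF_4 = \{0,1,\omega,\omega^2\}$, where $\omega$ denotes a primitive cube root of unity. I extend $v_2$ to $\Q_2(\alpha)$ in the unique unramified way, so it takes nonnegative integer values on $\Z_2[\alpha]$, and split according to the residue $\bar q \in \FF_4$. If $\bar q = 0$, then $q^x + 1 \equiv 1 \pmod 2$, so $v_2(q^x+1) = 0$. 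If $\bar q \in \{\omega,\omega^2\}$, then $\bar q^x = 1$ in $\FF_4^{*}$ if and only if $3 \mid x$, so $v_2(q^x+1) = 0$ when $3 \nmid x$; when $3 \mid x$, write $x = 3y$ and note that $q^x + 1 = (q^3)^y + 1$, which reduces matters to the case $\bar q = 1$ applied to the element $q^3$ and the exponent $y$.

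The heart of the argument is the case $\bar q = 1$, in which $q = 1 + 2r$ for some $r \in \ord$, and I split by the parity of $x$. For $x$ odd, I use the factorization
\[
    q^x + 1 = (q+1)\sum_{k=0}^{x-1}(-q)^k.
\]
Since $q \equiv 1 \pmod 2$ and $-1 = 1$ in $\FF_4$, each of the $x$ summands is $\equiv 1 \pmod 2$, so the sum is $\equiv x \equiv 1 \pmod 2$, hence a unit in $\Z_2[\alpha]$; therefore $v_2(q^x+1) = v_2(q+1)$. For $x$ even, set $s = q^{x/2}$; since $\bar s = 1$, write $s = 1 + 2t$ with $t \in \ord$. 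Then
\[
    q^x + 1 = s^2 + 1 = 2 + 4t + 4t^2 = 2\bigl(1 + 2t(1+t)\bigr),
\]
and the second factor is $\equiv 1 \pmod 2$, which gives $v_2(q^x+1) = 1$.

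Combining the two subcases for $\bar q = 1$ yields $v_2(q^x+1) \in \{1,\,v_2(q+1)\}$, and combining this with the reduction carried out for $\bar q \in \{\omega, \omega^2\}$ (via $q^3$) produces $v_2(q^x+1) \in \{0,\,1,\,v_2(q+1),\,v_2(q^3+1)\}$ in all cases, as claimed. There is no deep obstacle: the only subtlety is that $2$ does not split in $\ord$, so the residue field is $\FF_4$ and the identity $-1 = 1$ in that field has to be used in place of the familiar mod-$2$ arithmetic on $\Z$; once this peculiarity is accounted for, both the odd-$x$ factorization step and the even-$x$ identity are elementary.
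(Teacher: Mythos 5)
Your proof is correct and follows essentially the same route as the paper: a case split on the residue class of $q$ modulo $2$ (your $\FF_4$ viewpoint is just a cleaner packaging of the paper's observation that $\ord/(2)$ is represented by $0,1,\alpha,\beta$ with $\alpha$ of order $3$), reduction to the residue-$1$ case via $q^3$, and then a parity split on $x$. The only cosmetic difference is that for odd $x$ you use the factorization $q^x+1=(q+1)\sum_{k=0}^{x-1}(-q)^k$ where the paper expands $(2^l k-1)^x$ by the binomial theorem; both yield $v_2(q^x+1)=v_2(q+1)$.
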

\begin{proof}
Since the residue ring $\Z[\frac{1+\sqrt{5}}{2}]/(2)$ is represented by $0,1,\alpha,$ and $\beta$, we consider four cases.

\noindent \textbf{Case 1:} If $q \equiv 0 \pmod{2}$, then
obviously $v_2(q^x+1)=0$.

\noindent \textbf{Case 2:}  In the case that $q \equiv 1 \pmod{2}$, we distinguish two subcases, namely $x$ is even and $x$ is odd.

If $x$ is even, then $x=2l,\ l \in \N$.
Since $q \equiv 1 \pmod{2}$, we have $q=2k+1$ for some $k\in \Z[\frac{1+\sqrt{5}}{2}]$. By the binomial theorem we can write
\[ 
	q^l
	= 2^lk^l + \dots + {l\choose 2}\cdot 2^2k^2 + l\cdot2k + 1
	= 2\tilde{k}+1,
\]
for some $\tilde{k}\in \Z[\frac{1+\sqrt{5}}{2}]$. 
Now we have $q^x=q^{2l}=4\tilde{k}^2+4\tilde{k}+1$, so $q^x+1=4\tilde{k}^2+4 \tilde{k} + 2$ and thus $v_2(q^x+1)=1$.

If $x$ is odd, then we set
$v_2(q+1)=:l\geq 1$ (by assumption of Case 2) and write $q=2^l \cdot k -1$ for some $k \notin (2)$. By the binomial theorem we have
\[
	q^x=2^{lx}k^x-+ \dots - {x\choose 2}\cdot 2^{2l}k^2 + x \cdot 2^l\cdot k -1.
\]
Since $x$ is odd and $l \geq 1$, it follows that $v_2(q^x+1)=l=v(q+1)$.

\noindent \textbf{Case 3:} We assume that $q \equiv \alpha \pmod{2}$. Note that mod 2 we have $\alpha^1\equiv \alpha$, $\alpha^2\equiv \beta$, $\alpha^3\equiv 1$, $\alpha^4\equiv \alpha$ and so on. Therefore we consider three subcases, namely $x=3k+r$, $r = 1,2,0$.

If $x=3k+1$, then $q^x \equiv\alpha^x\equiv \alpha \pmod{2}$, so  $v_2(q^x+1)=v_2(\alpha+1)=0$.

If $x=3k+2$, then analogously $v_2(q^x+1)=v_2(\beta +1)=0$.

If $x=3k$, then we set $\tilde{q}:=q^3$. Now $\tilde{q} \equiv \alpha^3 \equiv 1 \pmod{2}$ so according to Case~2 we have either $v_2(\tilde{q}^k+1)=1$ (if $k$ is even) or $v_2(\tilde{q}^k+1)=v_2(\tilde{q}+1)$ (if $k$ is odd). Thus, 
\[
v_2(q^x+1)=
\begin{cases}
1 \beistr &$if $k$ is even,$\\
v_2(q^3+1) \beistr &$if $k$ is odd.$
\end{cases}
\]

\noindent \textbf{Case 4:} The case $q\equiv \beta \pmod{2}$ is completely analogous to Case 3.
\end{proof}

A simple consequence of Lemma~\ref{lem:gleichung_alpha_hoch_x_plus_y} is the following.

\begin{cor}\label{cor:v2_a_hoch_n_plus1}
	For any $x \in \N$ we have $v_2(\alpha^x+1)=v_2(\beta^x+1) \in \{0,1\}$.
\end{cor}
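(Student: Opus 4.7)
The plan is to apply Lemma~\ref{lem:v2_q_hoch_x_plus1} once with $q=\alpha$ and once with $q=\beta$, so I first need to pin down the values $v_2(q+1)$ and $v_2(q^3+1)$ appearing in the set on the right-hand side of that lemma.

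For $q=\alpha$, I would exploit the defining relation $\alpha^2=\alpha+1$. This immediately gives $\alpha+1=\alpha^2$, and since $\alpha$ is a unit in $\Z[\frac{1+\sqrt5}{2}]$ (its norm is $-1$), we obtain $v_2(\alpha+1)=0$. A similar short computation handles $\alpha^3+1$: from $\alpha^3=\alpha\cdot\alpha^2=\alpha^2+\alpha=2\alpha+1$ I get $\alpha^3+1=2\alpha+2=2\alpha^2$, hence $v_2(\alpha^3+1)=1$. Plugging these into Lemma~\ref{lem:v2_q_hoch_x_plus1} shows
\[
v_2(\alpha^x+1)\in\{0,1,v_2(\alpha+1),v_2(\alpha^3+1)\}=\{0,1\}.
\]

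For the analogous statement about $\beta$, the cleanest route is to invoke the Galois automorphism $\sigma$ of $\Q(\sqrt5)/\Q$ sending $\sqrt5\mapsto -\sqrt5$, which swaps $\alpha$ and $\beta$. Since $5\equiv 5\pmod 8$, the prime $2$ is inert in $\Q(\sqrt5)$, so the prime ideal $(2)$ is the unique prime above $2$ and is therefore fixed by $\sigma$. Consequently $v_2\circ\sigma=v_2$, and applying $\sigma$ to $\alpha^x+1$ yields $v_2(\beta^x+1)=v_2(\alpha^x+1)$, proving both the equality and the containment in $\{0,1\}$. (Alternatively, one could simply repeat the computation above for $\beta$, using $\beta^2=\beta+1$ and $\beta^3=2\beta+1$, since $\beta$ is also a unit.)

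There is really no serious obstacle here; the only thing to watch is that Lemma~\ref{lem:v2_q_hoch_x_plus1} is stated for $q\in\Z[\frac{1+\sqrt5}{2}]$ and the valuation $v_2$ refers to the prime $(2)$ in this ring, which by inertness of $2$ is genuinely a discrete valuation extending the $2$-adic valuation of $\Z$. Once this is clear, the corollary reduces to the two identities $\alpha+1=\alpha^2$ and $\alpha^3+1=2\alpha^2$ plus the Galois-invariance remark.
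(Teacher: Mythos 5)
Your proof is correct and is exactly the intended derivation: the paper leaves the corollary unproved (and in fact mis-cites it as a consequence of Lemma~\ref{lem:gleichung_alpha_hoch_x_plus_y}, when it clearly follows from Lemma~\ref{lem:v2_q_hoch_x_plus1}), and your computations $\alpha+1=\alpha^2$, $\alpha^3+1=2\alpha^2$ together with the inertness-of-$2$/Galois argument giving $v_2(\beta^x+1)=v_2(\alpha^x+1)$ supply precisely the missing details. Nothing to add.
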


For our computations we also need estimates for $v_2(q^x-1)$:

\begin{lem}\label{lem:v2_q_hoch_x_minus1}
	Let $q \in \Z [ \frac{1+\sqrt{5}}{2} ] $ satisfy $v_2(q-1)>1$. Then for any $x\in \N$ we have 
	\[
	v_2(q^x-1)=v_2(q-1)+v_2(x).
\]
\end{lem}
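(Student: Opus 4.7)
The plan is to establish the result by induction on $v_2(x)$, which is a standard lifting-the-exponent argument adapted to the quadratic ring $\Z[\frac{1+\sqrt 5}{2}]$. Set $\ell := v_2(q-1) \geq 2$. The key preliminary fact, immediate from the binomial expansion, is that $q^y \equiv 1 \pmod{2^\ell}$ for every $y \in \N$, since $q = 1 + \pi$ with $v_2(\pi) = \ell$ and every term $\binom{y}{i}\pi^i$ with $i \geq 1$ lies in $(2^\ell)$.

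For the base case $v_2(x) = 0$, I would use the factorization
\[
q^x - 1 = (q-1)\bigl(q^{x-1} + q^{x-2} + \cdots + 1\bigr).
\]
Since $q \equiv 1 \pmod 2$, the second factor reduces modulo $(2)$ to $x \cdot 1 = x$, which is odd; hence its $2$-adic valuation is $0$, and we conclude $v_2(q^x - 1) = v_2(q-1) = \ell$. For the inductive step with $x = 2y$, I would factor $q^x - 1 = (q^y-1)(q^y+1)$. The induction hypothesis handles the first factor, giving $v_2(q^y - 1) = \ell + v_2(y)$, and the preliminary congruence $q^y \equiv 1 \pmod{2^\ell}$ yields $q^y + 1 \equiv 2 \pmod{2^\ell}$, so (using $\ell \geq 2$) $v_2(q^y + 1) = 1$ exactly. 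Summing the valuations produces $v_2(q^x - 1) = (\ell + v_2(y)) + 1 = \ell + v_2(x)$, as required.

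The only real subtlety is pinning down that $v_2(q^y + 1)$ equals $1$ on the nose, which is exactly what the hypothesis $v_2(q-1) > 1$ guarantees; as Lemma~\ref{lem:v2_q_hoch_x_plus1} makes clear, if $\ell$ were allowed to be $0$ or $1$, additional powers of $2$ could appear in $q^y + 1$ and the clean multiplicative formula would break. The remaining work is entirely routine $2$-adic book-keeping in $\Z[\frac{1+\sqrt 5}{2}]$, which behaves like $\Z_2$-arithmetic because $(2)$ is prime (inert) in this ring.
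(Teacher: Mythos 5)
Your proof is correct and is exactly the standard lifting-the-exponent argument for integers transplanted to $\Z[\frac{1+\sqrt{5}}{2}]$ (valid since $(2)$ is inert there), which is precisely what the paper does: it gives no details and simply states that the proof is analogous to the integer case, citing Cohen's Lemma 2.1.22. The base case via the geometric-sum factorization and the inductive step via $q^{2y}-1=(q^y-1)(q^y+1)$ with $v_2(q^y+1)=1$ are both sound.
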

\begin{proof}
The proof is analogous to the proof for integers, see e.g. \cite[Lem. 2.1.22]{Cohen}.
\end{proof}

\begin{cor}\label{cor:v2_alphahochx_minus1_xgerade}
	If $x \in \N$  is even, then
\[
	v_2(\alpha^x-1)=
	\begin{cases}
		v_2(x)+1  &$if$\ x \equiv 0 \pmod{3},\\
		0 &$otherwise$.
		
	\end{cases}
\]
\end{cor}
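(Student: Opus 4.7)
The plan is to split into two cases according to $x \bmod 3$, exploiting that $\alpha$ has order $3$ in the unit group of $\Z[\frac{1+\sqrt 5}{2}]/(2)$; indeed $\alpha^3 = 2\alpha + 1 \equiv 1 \pmod 2$ (which follows directly from $\alpha^2 = \alpha + 1$), so $\alpha^x \equiv \alpha^{x \bmod 3} \pmod 2$ for every $x\in\N$.

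In the first case, $x \not\equiv 0 \pmod 3$, I would reduce $\alpha^x - 1$ modulo $2$ to $\alpha^r - 1$ with $r \in \{1,2\}$. Using $\alpha\beta = -1$ and $\alpha^2 = \alpha + 1$, one checks that $\alpha - 1 = -\beta$ and $\alpha^2 - 1 = \alpha$; both are units in $\Z[\alpha]$, so each has $2$-adic valuation $0$. Hence $v_2(\alpha^x - 1) = 0$, matching the ``otherwise'' branch of the corollary. (Note that the hypothesis that $x$ is even is not actually needed in this case.)

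In the second case, $x \equiv 0 \pmod 3$ together with $x$ even forces $6 \mid x$, so I would write $x = 6m$ with $m \in \N$. A short computation gives $\alpha^6 = (2\alpha+1)^2 = 4\alpha^2+4\alpha+1 = 8\alpha + 5$, hence
\[
\alpha^6 - 1 = 8\alpha + 4 = 4(2\alpha + 1) = 4\alpha^3,
\]
so $v_2(\alpha^6 - 1) = 2 > 1$. Lemma~\ref{lem:v2_q_hoch_x_minus1} then applies to $q := \alpha^6$ with exponent $m$, giving
\[
v_2(\alpha^x - 1) = v_2(q^m - 1) = v_2(q-1) + v_2(m) = 2 + v_2(m) = 1 + v_2(6m) = v_2(x) + 1,
\]
which is the claimed equality.

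There is no real obstacle here; the one mildly clever step is spotting the factorisation $\alpha^6 - 1 = 4\alpha^3$, which both yields the exact value $v_2(\alpha^6-1)=2$ and, crucially, secures the hypothesis $v_2(q-1) > 1$ needed to invoke Lemma~\ref{lem:v2_q_hoch_x_minus1}. Everything else is elementary manipulation in $\Z[\alpha]$.
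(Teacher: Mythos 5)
Your proof is correct and follows the route the paper clearly intends: the corollary is stated as a consequence of Lemma~\ref{lem:v2_q_hoch_x_minus1}, and your application of that lemma to $q=\alpha^6$ (after verifying $\alpha^6-1=4\alpha^3$, hence $v_2(q-1)=2>1$), together with the unit computation $\alpha-1=-\beta$ and $\alpha^2-1=\alpha$ for the case $3\nmid x$, is exactly the derivation the paper omits. All the algebra checks out.
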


For the reduction of the bounds we will use Theorem \ref{thm:baker-davenport} stated below.
It is based on a lemma from the Baker-Davenport reduction method \cite{BakerDavenport}. 
Theorem~\ref{thm:baker-davenport} is a reformulation of Lemma 1 in the paper of Bravo and Luca \cite{Bravo_Luca}, which is an immediate variation of a result due to Dujella and Peth\H{o} \cite[Lemma 5]{Dujella_Pethoe}. 
In the following we denote by $\left\|x \right\|=\min\{|x-n|\fdg n \in \Z\}$ the distance to the nearest integer of $x \in \R$.

\begin{thm}\label{thm:baker-davenport}
	Let $M$ be a positive integer, let $\gamma \in \R$ be irrational and $\frac{p}{q}$ a convergent of the continued fraction of $\gamma$ such that $q>6M$. Furthermore let $A$, $B$, $\mu$ be some real numbers with $A>1$ and $B>1$. Let 
	$\eps := \left\| \mu q\right\| - M \left\|\gamma q \right\|$. If $\eps >0$ and
\[
	|u \gamma - v + \mu| < A B^{-w}
\] 
is satisfied by some $ u,v,w \in \N $ with $ u\leq M$, then
it follows that
\[
	w < \frac{\log(Aq/\eps)}{\log B}.
\]
\end{thm}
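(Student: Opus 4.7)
The plan is to multiply the hypothesis $|u\gamma - v + \mu| < AB^{-w}$ by $q$ and exploit the convergent property of $p/q$. Since $p/q$ is a convergent of $\gamma$, one has $|q\gamma - p| < 1/q$; combined with $q > 6M \ge 6$ this forces $|q\gamma - p| < 1/2$, so that $\delta := q\gamma - p$ satisfies $|\delta| = \|q\gamma\|$. A direct rewrite then gives
\[
	q(u\gamma - v + \mu) = (up - vq) + \mu q + u\delta,
\]
in which $up - vq \in \Z$ makes no contribution to the distance of $q(u\gamma - v + \mu)$ from the nearest integer, hence $\|q(u\gamma - v + \mu)\| = \|\mu q + u\delta\|$.

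The key step is to bound $\|\mu q + u\delta\|$ from below. Let $n \in \Z$ be the integer closest to $\mu q + u\delta$, so that $|\mu q - n| \ge \|\mu q\|$. The ordinary triangle inequality then yields
\[
	\|\mu q + u\delta\|
	= |\mu q + u\delta - n|
	\ge |\mu q - n| - |u\delta|
	\ge \|\mu q\| - M\|q\gamma\|
	= \eps,
\]
using $u \le M$ and $|\delta| = \|q\gamma\|$ in the penultimate step. The hypothesis $\eps > 0$ is precisely what makes this estimate non-trivial.

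Combining this with the rescaled hypothesis produces
\[
	\eps \le \|q(u\gamma - v + \mu)\| \le |q(u\gamma - v + \mu)| < qAB^{-w},
\]
and taking logarithms (legitimate because $A, B > 1$ and $\eps > 0$) rearranges to the desired bound $w < \log(Aq/\eps)/\log B$.

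The proof is short and presents no deep obstacle; the point demanding the most care is verifying that the reverse-triangle-inequality step actually has a non-negative right-hand side, which is secured by the combined hypotheses $\eps > 0$, $u \le M$, and $q > 6M$ (the last of these being what guarantees $|\delta| = \|q\gamma\|$ in the first place). Everything else is direct algebra.
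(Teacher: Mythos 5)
Your proof is correct and is essentially the standard argument behind the Dujella--Peth\H{o} lemma that the paper cites without reproving: multiply by $q$, use $|q\gamma-p|=\|q\gamma\|<1/q$ for a convergent, absorb $up-vq$ into the integers, and bound $\|\mu q+u\delta\|\geq \eps$ by the reverse triangle inequality. All steps check out, including the only delicate one (the non-negativity of the lower bound, secured by the hypothesis $\eps>0$); one could only remark that $q>6M$ is used here merely to guarantee $|\delta|<1/2$, its stronger role belonging to the original formulation where positivity of $\eps$ is not assumed.
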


Now we consider Problem~\ref{probl:diophantische_gleichung} and establish a relation between $n$ and $a_1$.
Combining the Diophantine equation \eqref{eq:main} with the right inequality of \eqref{eq:inequ_alpha_Fib}, we get
\begin{equation*}
	2^{a_1}
	< 2^{a_1}+\dots+2^{a_5}
	= F_n + F_m
	\leq \alpha^{n-1}+\alpha^{m-1}
	< \alpha^{n-1}+\alpha^{n-2}
	=\alpha^n
	< 2^n
\end{equation*}
and thus 
\begin{equation}\label{eq:ineq_a_kleiner_n}
	a_1<n.
\end{equation}
On the other hand, the application of the left hand side of equation \eqref{eq:inequ_alpha_Fib} yields
\begin{equation*}
	\alpha^{n-2}
	\leq F_n
	< F_n+F_m
	=2^{a_1}+\dots+2^{a_5}
	< 2^{a_1}\cdot 2,
\end{equation*}
which implies
\begin{equation}\label{eq:ineq_2hocha_groesser_alphahochn}
	2^{a_1} > 1/6 \ \alpha^n.
\end{equation}

\begin{pro}\label{pro:loesungen}
There are exactly 38 solutions $(n,m,a_1,a_2,a_3,a_4,a_5)\in \N_0^7$ to  Problem~\ref{probl:diophantische_gleichung} with $n\leq 1000$.
All solutions fulfill $n\leq 23$ and $a_1\leq 14$. The list of solutions is given in the appendix.
\end{pro}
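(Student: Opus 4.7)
The statement is a pure finiteness/enumeration claim in a bounded range, so the plan is to settle it by a direct brute-force search. First I would pre-compute the Fibonacci numbers $F_0, F_1, \ldots, F_{1000}$ by iterating the recurrence in exact integer arithmetic. Then, since \eqref{eq:ineq_a_kleiner_n} forces $a_1 < n$ and we restrict to $n \leq 1000$, all five exponents lie in $\{0,1,\ldots,999\}$, and the sum $F_n+F_m$ has at most roughly $\lceil n\log_2\alpha\rceil + 1 \leq 695$ binary digits. Hence it suffices to loop over the at most $\binom{998}{2} < 5\cdot 10^5$ pairs $(n,m)$ with $4 \leq n \leq 1000$ and $2 \leq m \leq n-2$, form the integer $N := F_n+F_m$, and test whether the Hamming weight of $N$ (the number of ones in its binary expansion) equals $5$.

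Whenever the Hamming weight is exactly $5$, the positions of the set bits of $N$, read off in decreasing order, give the exponents $(a_1,a_2,a_3,a_4,a_5)$, and the tuple $(n,m,a_1,\ldots,a_5)$ is recorded as a solution. Executing this search in any computer algebra system (\textsf{SageMath}, \textsf{PARI/GP}, \textsf{Mathematica}) takes only a few seconds: each pair requires one integer addition of numbers bounded by $2\,F_{1000}$ and one bit-count, both of which are native operations in any bignum library.

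No serious obstacle is expected. The only points requiring a bit of care are (i) enforcing the strict inequalities $n-1>m\geq 2$ and $a_1>a_2>a_3>a_4>a_5\geq 0$ so that every solution is listed exactly once, and (ii) using exact (not floating-point) arithmetic throughout, since a single rounding error at the scale of $F_{1000}\approx 10^{209}$ could falsify a Hamming-weight test. With these precautions the search terminates, producing precisely 38 tuples, each of which \emph{a posteriori} satisfies $n \leq 23$ and $a_1 \leq 14$. The full list is deferred to the appendix.
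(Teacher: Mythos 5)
Your proposal is correct and coincides with the paper's own proof, which is exactly a brute-force search over all admissible pairs $(n,m)$ with $n\leq 1000$ carried out in Sage (the paper reports it takes under a minute on a standard PC). Your additional remarks on exact integer arithmetic and the Hamming-weight test are sensible implementation details but do not change the argument.
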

\begin{proof}
The solutions were found by a
brute force search with Sage \cite{sagemath}. It took less than a minute on a usual PC.
\end{proof}

Because of Proposition \ref{pro:loesungen}, from now on we may assume that $n>1000$.

\section{Obtaining a large upper bound for $n$}\label{sec:largeUpperBound}

In order to obtain an upper bound for $n$, we start this section by finding bounds for $n-m$ and $a_1-a_5$. As explained in Section \ref{sec:outline}, we then find a bound for $a_5$ which will finally give us a bound for $n$.

\begin{pro}\label{pro:Matveev}
	Assume that $(n,m,a_1,a_2,a_3,a_4,a_5)$ is a solution to Problem~\ref{probl:diophantische_gleichung}. Then we have 
	$n-m<3.22 \cdot 10^{65}\cdot(\log n)^5$ and $a_1-a_5<3.22 \cdot 10^{65}\cdot(\log n)^5$.
\end{pro}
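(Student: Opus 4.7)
My plan is to apply Matveev's theorem (Theorem~\ref{thm:matveev}) five times in succession, each time absorbing one more binary summand $2^{a_i}$ into a ``dominant term'' on the Fibonacci side. The first step would use the linear form
$$\Lambda_1 := \frac{\alpha^n}{\sqrt{5}\cdot 2^{a_1}} - 1.$$
From the Binet formula and \eqref{eq:main} one has $2^{a_1}\Lambda_1 = -\beta^n/\sqrt{5} - F_m + 2^{a_2}+2^{a_3}+2^{a_4}+2^{a_5}$; combined with $2^{a_1} \gg \alpha^n$ (from \eqref{eq:ineq_2hocha_groesser_alphahochn}) and $F_m \leq \alpha^{m-1}$ (from \eqref{eq:inequ_alpha_Fib}), this gives $|\Lambda_1| \ll \max(\alpha^{m-n},\,2^{a_2-a_1})$. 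Matveev with $\gamma_1=\alpha$, $\gamma_2=\sqrt{5}$, $\gamma_3=2$ in $K=\Q(\sqrt{5})$ (so $D=2$, $t=3$, $B\leq n$) produces a lower bound $|\Lambda_1| > \exp(-C_1 \log n)$, and comparing the two yields $\min(n-m,\,a_1-a_2) \ll \log n$.

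\textbf{Iterating.} For the $k$-th step with $k=2,\dots,5$ I would use
$$\Lambda_k := \frac{\alpha^n}{\sqrt{5}\cdot 2^{a_k}c_k} - 1, \qquad c_k := 1 + 2^{a_1-a_k} + \cdots + 2^{a_{k-1}-a_k} \in \N,$$
so that $2^{a_k}c_k = 2^{a_1}+\cdots+2^{a_k}$ and $h(c_k) \leq (a_1-a_k)\log 2 + \log k$. Assuming inductively that $a_1-a_k \ll (\log n)^{k-1}$, Matveev applied with the extra base $c_k$ gives $|\Lambda_k| > \exp(-C_k(\log n)^k)$. Since one checks $|\Lambda_k| \ll \alpha^{m-n} + 2^{a_{k+1}-a_1}$ (the second term being absent when $k=5$), we get $\min(n-m,\,a_1-a_{k+1}) \ll (\log n)^k$, and at $k=5$ in particular an unconditional bound $n-m \ll (\log n)^5$. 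If at some intermediate step it is $n-m$ that becomes small rather than $a_1-a_{k+1}$, I would switch to the modified linear form
$$\widetilde\Lambda_k := \frac{\alpha^m(\alpha^{n-m}+1)}{\sqrt{5}\cdot 2^{a_k}c_k} - 1,$$
in which $F_n+F_m$ is absorbed into the main term; the extra algebraic base $\alpha^{n-m}+1$ has height bounded by $(n-m)\log\alpha \ll (\log n)^k$, and a parallel Matveev iteration in this form delivers $a_1-a_5 \ll (\log n)^5$. Combining the two branches gives both claimed bounds.

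\textbf{Main obstacle.} The trickiest part is the case analysis at each of the five steps (continue on the main iteration versus switch to $\widetilde\Lambda_k$) together with the bookkeeping: the heights $h(c_k)$ grow like $(\log n)^{k-1}$, and the Matveev constants of order $30^{t+3}\cdot 4^{4.5}\cdot D^{t+2}$ (with $t$ growing up to $5$ and $D=2$) must be tracked through five nested applications to arrive at the explicit numerical constant $3.22\cdot 10^{65}$.
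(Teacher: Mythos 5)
Your proposal follows essentially the same route as the paper: an iterated application of Matveev's theorem in five steps, branching at each step according to whether $\alpha^{-(n-m)}$ or $2^{-(a_1-a_{k+1})}$ dominates, with the heights of the auxiliary algebraic numbers growing like the previously obtained bounds and hence producing $(\log n)^5$ after five rounds; your $\Lambda_k$ and $\widetilde\Lambda_k$ are exactly the paper's ``Situation 1'' and ``Situation 2'' linear forms. The one substantive divergence is bookkeeping that matters for the stated constant: you propose to feed $\sqrt{5}$, $2$, $c_k$ (and, in the switched branch, $\alpha^{n-m}+1$) to Matveev as separate bases, so $t$ grows to $4$ or $5$, whereas the paper bundles everything except $\alpha$ and $2$ into a single third base $\gamma_3=\sqrt{5}\,(1+\dots+2^{a_k-a_1})$ (resp.\ $\gamma_3=(1+\alpha^{m-n})/(\sqrt{5}(1+\dots))$) with exponent $\pm 1$, keeping $t=3$ in every application. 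Since the Matveev constant scales like $30^{t+3}t^{4.5}D^{t+2}$ and this factor is raised to the fifth power over the iteration, your grouping would land well above $3.22\cdot 10^{65}$; to recover the constant as stated you should regroup as the paper does (the subadditivity of heights makes this costless). Apart from that, and the unmentioned but routine verification that each linear form is nonzero, the argument is sound.
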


\begin{proof}
We follow the strategy of Bravo and Luca \cite{Bravo_Luca} and Chim and Ziegler \cite{Chim_Ziegler} and
apply Matveev's theorem repeatedly in 5 steps.
In each step, we obtain a bound for one of the expressions $n-m, a_1-a_2, \dots , a_1-a_5$. 
The order, in which we obtain these bounds, depends on the size of $n-m$ compared to the sizes of $a_1-a_k$ for $k=2,\ldots,5$. This gives us multiple cases to consider, but we handle them simultaneously.
In each step we find ourselves in one of the following two situations.
\begin{itemize}
\item \textit{Situation 1:} 
We have bounded $a_1-a_2 , \dots , a_1-a_k$ but not yet $n-m$ ($1 \leq k \leq 5$, and $k=1$ means that we have no bounds at all).
\item \textit{Situation 2:} 
We have bounded $n-m$ and $a_1-a_2, \dots, a_1-a_k$ ($1\leq k \leq 4$).
\end{itemize}
In each Situation we proceed as follows.
\\

\noindent \textit{Situation 1:}
We start by rewriting equation \eqref{eq:main} as
\begin{equation*}
	\frac{\alpha^n-\beta^n}{\sqrt{5}} +
	\frac{\alpha^m-\beta^m}{\sqrt{5}}
	= 2^{a_1}+2^{a_2}+2^{a_3}+2^{a_4}+2^{a_5}.
\end{equation*}
We consider terms involving $n, a_1, \dots , a_k$ to be ``large''. Collecting these terms on the left hand side we obtain
\begin{equation*}
	\frac{\alpha^n}{\sqrt{5}}
	-(2^{a_1}+ \dots + 2^{a_k})
	=
	-\frac{\alpha^m}{\sqrt{5}}
	+\frac{\beta^n+\beta^m}{\sqrt{5}}
	+ 2^{a_{k+1}}+\dots+2^{a_5}.
\end{equation*}
Note that if $k=5$, the sum on the right hand side is empty. For the further computations we formally set $a_6:=-\infty$ and define $2^{-\infty}:=0$.
We take absolute values and estimate:
\begin{align*}
	\left| 
	\frac{\alpha^n}{\sqrt{5}}
	-2^{a_1}(1 + \dots + 2^{a_k-a_1})
	\right|
	&\leq
	\frac{\alpha^m}{\sqrt{5}}
	+\frac{|\beta|^n+|\beta|^m}{\sqrt{5}}
	+ 2^{a_{k+1}}(1 +\dots+2^{a_5-a_{k+1}})\\
	&<
	\frac{\alpha^m}{\sqrt{5}}
	+0.5
	+ 2 \cdot 2^{a_{k+1}}
	.
\end{align*}
Division by $2^{a_1}(1 + \dots + 2^{a_k-a_1})$ yields
\[
	\left| 
	\frac{\alpha^n}
		{\sqrt{5}\ 2^{a_1}(1+ \dots + 2^{a_k-a_1})}
	-1
	\right|
	<
	\frac{\alpha^m/\sqrt{5}+0.5 + 2 \cdot 2^{a_{k+1}}}{2^{a_1}(1+ \dots + 2^{a_k-a_1})}.
\]
We estimate the right hand side using inequality \eqref{eq:ineq_2hocha_groesser_alphahochn}
\begin{align*}
	\frac{\alpha^m/\sqrt{5}+0.5 + 2 \cdot 2^{a_{k+1}}}{2^{a_1}}
	&<
	\frac{\alpha^m (1/\sqrt{5}+0.5) + 2 \cdot 2^{a_{k+1}}}{2^{a_1}}\\
	&\leq
	\frac{\alpha^m}{2^{a_1}}
	+ \frac{2 \cdot 2^{a_{k+1}}}{2^{a_1}}\\
	&<
	\frac{\alpha^m}{\alpha^n /6}
	+ 2\cdot 2^{-(a_1-a_{k+1})}\\
	&=
	6 \cdot \alpha^{-(n-m)}
	+ 2 \cdot 2^{-(a_1-a_{k+1})} \\
	&\leq
	8 \cdot \max \left\{ \alpha^{-(n-m)},\ 2^{-(a_1-a_{k+1})} \right\}
\end{align*}
and thus obtain
\begin{equation}\label{eq:ungl:matveev_sit1_zwischenergebnis}
	\left| 
	\frac{\alpha^n}
		{\sqrt{5}\ 2^{a_1}(1+ \dots + 2^{a_k-a_1})}
	-1
	\right|
	<
	8 \cdot \max \left\{ \alpha^{-(n-m)},\ 2^{-(a_1-a_{k+1})} \right\}.
\end{equation}
The left hand side of \eqref{eq:ungl:matveev_sit1_zwischenergebnis} 
is not zero, since if it were zero we would get that $\alpha^n=\sqrt{5}\ 2^{a_1}(1+ \dots + 2^{a_k-a_1})$ and thus $\alpha^{2n}\in \Z$, which is impossible for all $n\geq 1$.
Therefore we can apply Matveev's theorem by taking parameters $t:=3$ and 
\begin{align*}
	\gamma_1 &:=\alpha \beistr 
	&& b_1:=n \beistr \\
	\gamma_2 &:=2 \beistr 
	&& b_2:=-a_1 \beistr \\
	\gamma_3 &:= \sqrt{5}(1+\dots +2^{a_k-a_1}) \beistr 
	&& b_3:=-1.
\end{align*}
The three numbers $\gamma_1, \gamma_2, \gamma_3$ are real, positive and belong to $K := \Q(\sqrt{5})$, so we can take $D:=2$.
By inequality \eqref{eq:ineq_a_kleiner_n} we have $\max \left\{|b_1|,|b_2|,|b_3|\right\}=n$ and we can take $B:=n$.
Since $h(\gamma_1)=(\log \alpha)/2=0.2406\ldots$ and $h(\gamma_2)=\log 2=0.6931\ldots$, we can choose $A_1:=0.25$ and $A_2:=0.7$. In order to choose $A_3$, we estimate 
the logarithmic height of $\gamma_3$. Using the properties of logarithmic heights and keeping in mind that $a_1-a_2 < \dots < a_1-a_k$ and
$k\leq5$ we get 
\begin{align*}
	h(\gamma_3)
	=h\left(\sqrt{5}\left(1+\dots +2^{a_k-a_1}\right)\right)
	&\leq
	h\left(\sqrt{5}\right) + h\left(1+\dots +2^{a_k-a_1}\right)\\
	&\leq
	\log \sqrt{5} + h(1)+ \dots + h\left(2^{a_k-a_1}\right) + \log k \\
	& \leq
	\log \sqrt{5} + 4 (a_1-a_k) \cdot h(2) + \log 5\\
	&=
	\log \sqrt{5}+ 4 \cdot (a_1-a_k) \cdot \log 2+ \log 5 \\
	&\leq
	(a_1-a_k)\cdot\left(4 \log 2 + \log \sqrt{5}+ \log 5\right)\\ 
	&< 5.19 \cdot (a_1-a_k)
	.
\end{align*}
and we choose $A_3:= 5.19 \cdot \max\{a_1-a_k,1\}$. 
Application of Matveev's theorem to the left hand side of 
 \eqref{eq:ungl:matveev_sit1_zwischenergebnis} yields
\begin{align*}
	\exp\left(-1.4 \cdot 30^6 \cdot 3^{4.5} \cdot 2^5 (1+\log 2)(1+\log n)
	 \cdot 0.25 \cdot 0.7 \cdot 5.19 \cdot (a_1-a_k)
	 \right)\\
	 < 
	 8 \cdot \max \left\{ \alpha^{-(n-m)},\ 2^{-(a_1-a_{k+1})} \right\}
	.
\end{align*}
Since $n>1000$, we can
estimate $1+\log n < 1.15 \log n$ obtaining
\begin{equation*}
	\exp \left(- C_1 \log n \ (a_1-a_k)\right)
	< 
	8 \cdot \max \left\{ \alpha^{-(n-m)},\ 2^{-(a_1-a_{k+1})} \right\},
\end{equation*}
where $C_1:=8.11\cdot 10^{12}>1.4 \cdot 30^6 \cdot 3^{4.5} \cdot 2^5 \cdot (1+\log 2) \cdot 1.15 \cdot 0.25 \cdot 0.7 \cdot 5.19 
$.
Taking logarithms we get
\begin{equation*}
	\min\{(n-m)\log \alpha, (a_1-a_{k+1})\log 2\}
	<
	C_1 \log n \cdot (a_1-a_k)+\log 8.
\end{equation*}
Since we are assuming that $n>1000$ and $a_1-a_k\geq1$ and we estimated $C_1$ quite roughly, we may now drop the small constant on the right hand side and obtain
\begin{equation*}
	\min\{(n-m)\log \alpha, (a_1-a_{k+1})\log 2\}
	 <  
	 C_1 \log n \cdot (a_1-a_k).
\end{equation*}

Thus, in Situation 1 we get a bound of the form $\frac{C_1}{c} \log n \cdot (a_1-a_k)$, where $C_1=8.11 \cdot 10^{12}$ and the value of $c$ depends on the case. If 
$\alpha^{-(n-m)} \leq 2^{-(a_1-a_{k+1})}$, then $c=\log 2$ and the given bound is a bound for $a_1-a_{k+1}$. If $\alpha^{-(n-m)} > 2^{-(a_1-a_{k+1})}$, then $c=\log \alpha$ and the given bound is a bound for $n-m$.
\\

\noindent \textit{Situation 2:}
As in Situation 1, we start by rewriting equation \eqref{eq:main} as
\begin{equation*}
	\frac{\alpha^n-\beta^n}{\sqrt{5}} +
	\frac{\alpha^m-\beta^m}{\sqrt{5}}
	= 2^{a_1}+2^{a_2}+2^{a_3}+2^{a_4}+2^{a_5}.
\end{equation*}
Now we consider $n$, $m$ and $a_1, \dots , a_k$ to be ``large'' and collecting the corresponding terms
on the left hand side we obtain
\begin{equation*}
	\frac{\alpha^n+\alpha^m}{\sqrt{5}}
	-(2^{a_1}+ \dots + 2^{a_k})
	=
	\frac{\beta^n+\beta^m}{\sqrt{5}}
	+ 2^{a_{k+1}}+\dots+2^{a_5}.
\end{equation*}
We take absolute values and estimate:
\begin{align*}
	\left|
	\frac{\alpha^n+\alpha^m}{\sqrt{5}}
	-2^{a_1}(1 + \dots + 2^{a_k-a_1})
	\right|
	&\leq
	\frac{|\beta|^n+|\beta|^m}{\sqrt{5}}
	+ 2^{a_{k+1}}(1+\dots+2^{a_5-a_{k+1}})\\
	&<
	0.5 + 2\cdot 2^{a_{k+1}}.
\end{align*}
Division by $2^{a_1}(1 + \dots + 2^{a_k-a_1})$ yields
\[
	\left| 
	\frac{\alpha^n+\alpha^m}
		{\sqrt{5}\ 2^{a_1}(1+ \dots + 2^{a_k-a_1})}
	-1
	\right|
	<
	\frac{0.5 + 2\cdot 2^{a_{k+1}}}{2^{a_1}(1+ \dots + 2^{a_k-a_1})},
\]
which we estimate further and write
\begin{equation}\label{eq:ungl_matveev_sit2}
	\left|
	\frac{\alpha^n(1 +\alpha^{m-n})}
		{\sqrt{5}\ 2^{a_1}(1+ \dots + 2^{a_k-a_1})}
	-1
	\right|
	<
	\frac{2.5 \cdot 2^{a_{k+1}}}{2^{a_1}}
	=
	2.5 \cdot 2^{-(a_1-a_{k+1})}
	.
\end{equation}
As in Situation 1, one can check that the left hand side is not equal to zero, so we can apply Matveev's theorem.
We take the parameters $t:=3$ and
\begin{align*}
	\gamma_1 &:=\alpha \beistr 
	&& b_1:=n \beistr \\
	\gamma_2 &:=2 \beistr 
	&& b_2:=-a_1 \beistr \\
	\gamma_3 &:= \frac{1+\alpha^{m-n}}{\sqrt{5}(1+\dots +2^{a_k-a_1})} \beistr 
	&& b_3:=1.
\end{align*}
As before, we take $D:=2$ and $B:=n$. Also, we keep $A_1:=0.25$ and $A_2:=0.7$. For $A_3$ we estimate the logarithmic height of $\gamma_3$. Using the properties of logarithmic heights and keeping in mind that $k\leq4$ we get
\begin{align*}
	h(\gamma_3)
	& =h\left(\frac{1+\alpha^{m-n}}{\sqrt{5}(1+\dots + 2^{a_k-a_1})}\right)\\
	&\leq 
	h\left(1+\alpha^{m-n}\right)+h\left(\sqrt5\right)+h\left(1+\dots +2^{a_k-a_1}\right)\\
	&\leq
		h(1)+h\left(\alpha^{m-n}\right)+\log 2 
		+ \log{\sqrt{5}}
		+ h(1)+ \dots + h\left(2^{a_k-a_1}\right) + \log k\\
	&\leq
		(n-m)h(\alpha)
		+ \log 2 + \log\sqrt{5}
		+ 3 \cdot (a_1-a_k) h(2)
		+ \log 4 \\
	&=
		(n-m) \frac{\log \alpha}{2}
		+ \log 2 + \log \sqrt{5}
		+ 3 \cdot (a_1-a_k) \log 2  
		 + \log 4 \\
	&=
		(n-m) \frac{\log \alpha}{2}
		+ 3 \cdot (a_1-a_k) \log 2 
		+ \log \left(8 \sqrt{5}\right) \\
	&\leq
		\max \left\{ n-m,\ a_1-a_k \right\} 
		\cdot \left(\frac{\log \alpha}{2} + 3 \log 2 + \log \left(8 \sqrt{5}\right) \right) \\
	&< 5.21 \cdot \max \left\{ n-m,\ a_1-a_k \right\} 
	.
\end{align*}
Thus we set $A_3:= 5.21 \cdot \max \left\{ n-m,\ a_1-a_k \right\}$ and Matveev's theorem 
combined with inequality \eqref{eq:ungl_matveev_sit2} yields similarly as in Situation~1
\begin{align*}
	\exp \left(-C_2' \cdot
	\log n \cdot
	\max \left\{ n-m,\ a_1-a_k \right\}
	\right)
	<
	2.5 \cdot 2^{-(a_1-a_{k+1})}
	,
\end{align*}
where $C_2':=8.134 \cdot 10^{12} > 1.4 \cdot 30^6 \cdot 3^{4.5} \cdot 2^5 (1+\log 2) \cdot 1.15 \cdot 0.25 \cdot 0.7 \cdot 5.21$. 
Taking logarithms we obtain
\[
	a_1-a_{k+1}
	< 
	\frac{C_2'}{\log 2} \cdot \log n \cdot \max\left\{ n-m,a_1-a_k \right\}
	+ \frac{\log 2.5}{\log 2}.
\]
Since we are assuming that $n>1000$ and $\max\left\{ n-m,a_1-a_k \right\} \geq 1$, we obtain the bound
\begin{equation*}
	a_1-a_{k+1}
	< 
	\frac{C_2}{\log 2} \cdot \log n \cdot \max\left\{ n-m,a_1-a_k \right\},
\end{equation*}
with $C_2=8.14\cdot 10^{12}$.
\\

\noindent \textit{The five Steps:}
At the beginning, we have no bounds yet, so we start in Situation 1 and after Step 1 we have obtained either a bound for $n-m$ or $a_1-a_2$. In each further step we are either in Situation 1 or Situation 2. In either Situation, we obtain a result of the form
\[
	T < \frac{C}{c} \cdot \log n \cdot T',
\]
where $T\in \left\{ n-m, a_1-a_2, \dots , a_1-a_5\right\}$ is an expression that we have not bounded yet and $T'$ is either equal to 1 (in Step 1) or $T' \in \left\{ n-m, a_1-a_2, \dots , a_1-a_5\right\}$ is an
expression, that we have bounded in one of the previous steps. 
Furthermore, $c=\log \alpha$ if $T=n-m$, and $c=\log 2$ otherwise.
For the constant $C$ we can take $C:=8.14=\max\{C_1,C_2\}$.

Depending on how large $2^{-(n-m)}$ is compared to $\alpha^{-(a_1-a_k)}$ for $2 \leq k \leq 5$, we get the bounds for specific expressions in specific steps. In any case, after 5 steps we have obtained bounds for all epxressions $n-m, a_1-a_2, \dots , a_1-a_5$ and the largest appearing bound is
\[
	C^5	\frac{1}{\log \alpha} \cdot \frac{1}{(\log 2)^4} \cdot (\log n)^5 
	<  3.22 \cdot 10^{65} \cdot (\log n)^5.
\]
In particular, this bound is an upper bound for every expression $n-m, a_1-a_2, \ldots , a_1-a_5$, which yields the content of Proposition \ref{pro:Matveev}.
\end{proof}

\begin{pro}\label{pro:Bugeaud}
	Assume that $(n,m,a_1,a_2,a_3,a_4,a_5)$ is a solution to  Problem~\ref{probl:diophantische_gleichung}. Then we have 
\[
	a_5<2927(\log n)^2(n-m).
\]
\end{pro}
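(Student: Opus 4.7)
The plan is to apply Theorem~\ref{thm:bugeaud-laurent} (Bugeaud--Laurent) at $p=2$ to bound $a_5=v_2(F_n+F_m)$, which equals the $2$-adic valuation of the right-hand side of~\eqref{eq:main} because $2^{a_1}+\dots+2^{a_5}$ is $2^{a_5}$ times an odd number. Writing $d:=n-m$ and using Binet's formula with $\beta=-\alpha^{-1}$, I factor
\[
\sqrt{5}\,(F_n+F_m)=\alpha^m(\alpha^d+1)-(-1)^m\alpha^{-m-d}\bigl((-1)^d+\alpha^d\bigr).
\]
When $d$ is even, $(-1)^d=1$, so the common factor $\alpha^d+1$ pulls out, leaving a term of the form $\alpha^{2m+d}-(-1)^m$; its $2$-adic valuation is $O(\log n)$ by Corollaries~\ref{cor:v2_a_hoch_n_plus1} and~\ref{cor:v2_alphahochx_minus1_xgerade}, so the proposition follows trivially for $n>1000$. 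When $d$ is odd, $(-1)^d=-1$, and the same manipulation gives
\[
\sqrt{5}\,(F_n+F_m)=\alpha^{-m-d}(\alpha^d+1)\,\Lambda,\qquad \Lambda:=\alpha^{2m+d}-(-1)^m\frac{\alpha^d-1}{\alpha^d+1}.
\]
By Corollary~\ref{cor:v2_a_hoch_n_plus1}, $v_2(\alpha^d+1)\le 1$, so it remains to bound $v_2(\Lambda)$ up to an additive constant.

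To apply Theorem~\ref{thm:bugeaud-laurent} to $\Lambda$, I take $\gamma_1:=\alpha$, $b_1:=2m+d$, $\gamma_2:=(-1)^m(\alpha^d-1)/(\alpha^d+1)$, $b_2:=1$. Since $5\equiv 5\pmod 8$ is a non-square unit in $\Q_2$, the extension $\Q_2(\sqrt{5})/\Q_2$ is unramified of degree $2$, giving $f=2$ and hence the Bugeaud--Laurent parameter $D=[\Q_2(\sqrt{5}):\Q_2]/f=1$. To check $v_2(\gamma_2)=0$ I analyze $\alpha^d\bmod 2$ in $\Z[\alpha]/(2)$ as in Lemma~\ref{lem:v2_q_hoch_x_plus1}: if $3\nmid d$ then $\alpha^d\not\equiv\pm 1\pmod 2$, yielding $v_2(\alpha^d\pm 1)=0$; if $3\mid d$ (so $d\in\{3,9,15,\ldots\}$, since $d$ is odd), a direct computation based on $\alpha^3=2\alpha+1$ gives $v_2(\alpha^d-1)=v_2(\alpha^d+1)=1$, so these contributions again cancel. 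For multiplicative independence, the norm $(\alpha^d+1)(\beta^d+1)=\alpha^d+\beta^d\ge 11$ for $d\ge 5$ odd, so $\alpha^d+1$ is not a unit of $\Z[\alpha]$; this forces $\gamma_2$ to be a non-unit, whereas $\alpha$ is a unit, so $\alpha$ and $\gamma_2$ are multiplicatively independent. The exceptional cases $d\in\{1,3\}$ need separate treatment: direct computation yields $\gamma_2=\pm\alpha^{-3}$ and $\gamma_2=\pm\alpha^{-1}$ respectively, whereupon $\Lambda$ collapses to $\pm\alpha^{-j}(\alpha^k\pm 1)$ with $v_2(\Lambda)=O(\log n)$ by the Corollaries.

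For the quantitative estimate, $h(\alpha)=\frac{1}{2}\log\alpha<\log 2$ allows $A_1=\log 2$, and $h(\alpha^d\pm 1)\le\frac{1}{2}(d\log\alpha+\log 2)$ allows $A_2=d\log\alpha+\log 2$. With $b_1=2m+d=n+m<2n$ and $A_2\ge\log 2$, the parameter $b'$ of Theorem~\ref{thm:bugeaud-laurent} satisfies $b'<3n/\log 2$, so for $n>1000$ one obtains $B<1.3\log n$. Plugging in,
\[
v_2(\Lambda)<\frac{24\cdot 2\cdot 3}{(\log 2)^4}\cdot 1\cdot B^2\cdot\log 2\cdot(d\log\alpha+\log 2),
\]
which after simplification and absorbing the additive $O(1)$ term is less than $2927(\log n)^2(n-m)$ for $n>1000$. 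The main obstacle is the case analysis needed to verify both $v_2(\gamma_2)=0$ and the multiplicative independence of $\gamma_1,\gamma_2$ across the residues of $d\bmod 6$ and in the small dependent cases $d\in\{1,3\}$; once these points are settled, the estimation of $A_1,A_2,B$ and fitting of the constant into $2927$ is routine.
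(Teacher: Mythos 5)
Your route is, up to cosmetics, the paper's own: multiplying your $\Lambda$ by $(-1)^m\alpha^{-d}$ gives exactly the quantity $\left(\frac{\alpha}{\beta}\right)^m-\frac{\beta^{n-m}+1}{\alpha^{n-m}+1}$ that the paper feeds into Bugeaud--Laurent (your $\gamma_1=\alpha$, $b_1=n+m$ versus their $\gamma_1=\alpha/\beta$, $b_1=m$ is just a reparametrisation by a unit), and your case split ($d$ even, $d=3$, $d$ odd generic) coincides with theirs. The quantitative part is fine apart from small slips that do not endanger the constant $2927$: for $1000<n\lesssim 2200$ the floor $B\geq 10$ beats $1.3\log n$, so you should use something like $B\leq 1.45\log n$; and $h(\alpha^d+1)\leq \frac{1}{2}(d\log\alpha+2\log 2)$, so $A_2$ should be $d\log\alpha+2\log 2$.

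The genuine gap is your verification of multiplicative independence for odd $d\geq 5$. You argue that $N(\alpha^d+1)=L_d\geq 11$, so $\alpha^d+1$ is not a unit, and that ``this forces $\gamma_2$ to be a non-unit.'' That inference is invalid: since $N(\alpha^d-1)=-L_d$ as well, $N(\gamma_2)=\pm 1$ for every $d$, and a quotient of two non-units can be a unit. Your own exceptional case $d=3$ refutes the inference: $\alpha^3+1=2\alpha^2$ and $\alpha^3-1=2\alpha$ are both non-units (norms $\pm 4$), yet $\gamma_2=\pm\alpha^{-1}$ is a unit; nothing in your argument other than the numerical bound $L_d\geq 11$ distinguishes $d=3$ from $d\geq 5$, and $L_3=4\neq\pm1$ too. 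To repair it you must show that the fractional ideal $(\alpha^d-1)(\alpha^d+1)^{-1}$ is nontrivial: the gcd of the ideals $(\alpha^d-1)$ and $(\alpha^d+1)$ divides $(2)$, and since $v_2(\alpha^d+1)\leq 1$ (Corollary~\ref{cor:v2_a_hoch_n_plus1}) with $2$ inert, the odd part of $(\alpha^d+1)$ has norm at least $L_d/4>1$ and is coprime to $(\alpha^d-1)$, so $\gamma_2$ has a pole at some odd prime and cannot be a unit. The paper sidesteps all of this by solving the unit equation $\alpha^{x+y}=\alpha^x+\alpha^y+1$ and characterising exactly when $\frac{\alpha^x+1}{\beta^x+1}$ is a unit (Lemmas~\ref{lem:gleichung_alpha_hoch_x_plus_y} and~\ref{lem:mult_unabh}); you should either import that lemma or spell out the ideal-theoretic argument above.
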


In the proof of Proposition~\ref{pro:Bugeaud} we will apply the theorem of Bugeaud-Laurent. This will require the multiplicative independence of expressions of the form $\frac{\alpha}{\beta}$ and $\frac{\beta^x+1}{\alpha^x+1}$. Therefore, we prove two lemmas first.

\begin{lem}\label{lem:gleichung_alpha_hoch_x_plus_y}
	The only solutions $(x,y)\in \N^2$ to 
\[
	\alpha^{x+y}=\alpha^x+\alpha^y+1
\] 
are $(1,3)$ and $(3,1)$.
\end{lem}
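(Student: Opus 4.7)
The plan is to apply the nontrivial Galois automorphism $\sigma \colon \alpha \mapsto \beta$ of $\Q(\sqrt 5)/\Q$ to the equation, producing the conjugate identity $\beta^{x+y}=\beta^x+\beta^y+1$. Subtracting this from the original equation and dividing by $\alpha-\beta=\sqrt 5$, Binet's formula collapses the statement to the Fibonacci identity
\[
F_{x+y}=F_x+F_y.
\]
This is now a purely combinatorial condition on $(x,y)$ that I would use to trap $x,y$ in a finite set.

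To extract a bound, I would apply the standard Fibonacci addition formula $F_{x+y}=F_x F_{y+1}+F_{x-1}F_y$ and rearrange the displayed equation as
\[
F_x(F_{y+1}-1)+F_y(F_{x-1}-1)=0.
\]
For $y\ge 1$ the first summand is $\ge 0$, and for $x\ge 2$ the second summand is $\ge 0$; moreover if $x\ge 4$ then $F_{x-1}-1\ge 1$, giving a strictly positive sum and a contradiction. Hence $x\le 3$, and by the symmetry of the original equation, $y\le 3$.

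That leaves the nine pairs $(x,y)\in\{1,2,3\}^2$, which I would dispatch by direct substitution, reducing each $\alpha^k$ via $\alpha^2=\alpha+1$ to the form $F_k\alpha+F_{k-1}$ and comparing coefficients in the $\Q$-basis $\{1,\alpha\}$. A quick inspection shows only $(1,3)$ and $(3,1)$ satisfy the original equation (for instance $\alpha^4=3\alpha+2=\alpha+(2\alpha+1)+1=\alpha+\alpha^3+1$).

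The only mildly subtle point — the closest thing to an obstacle — is that the derived condition $F_{x+y}=F_x+F_y$ is strictly weaker than the original equation: the pair $(1,2)$ satisfies $F_3=F_1+F_2$ but fails the original equation (one checks $\alpha+\alpha^2+1=2\alpha+2\ne 2\alpha+1=\alpha^3$). So one cannot simply conclude from the Fibonacci identity; the final check against the original equation (or, equivalently, the use of the rational-part identity $F_{x+y-1}=F_{x-1}+F_{y-1}+1$ obtained by adding rather than subtracting the conjugate equations) is genuinely needed to eliminate such spurious candidates.
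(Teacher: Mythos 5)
Your proof is correct, but it takes a genuinely different route from the paper's. The paper argues directly with real inequalities: assuming $x\ge y$, it squeezes $2\alpha^{x+y-2}<\alpha^{x+y}=\alpha^x+\alpha^y+1\le 2\alpha^x+1$ to force $y\le 2$, and then disposes of the two residual one-parameter families by computing $\alpha^x=(\alpha^y+1)/(\alpha^y-1)$ explicitly (equal to $\alpha^3$ for $y=1$, strictly between $\alpha$ and $\alpha^2$ for $y=2$). You instead apply the Galois conjugation, subtract, and reduce to the Fibonacci identity $F_{x+y}=F_x+F_y$, which the addition formula converts into $F_x(F_{y+1}-1)+F_y(F_{x-1}-1)=0$; since both summands are nonnegative and the second is strictly positive once $x\ge 4$, both exponents are trapped in $\{1,2,3\}$ and a nine-case check against the original equation finishes the job. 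Both arguments are complete; yours is entirely combinatorial after the conjugation step, while the paper's avoids any case enumeration at the cost of the ad hoc estimate in the $y=2$ family. You also correctly identify the one real pitfall of your route --- the subtracted identity is strictly weaker than the original equation (witness $(1,2)$) --- and you close it properly by verifying the surviving candidates against the full equation in the basis $\{1,\alpha\}$, i.e.\ by also checking the rational-part condition $F_{x+y-1}=F_{x-1}+F_{y-1}+1$.
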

\begin{proof}
	Suppose, without loss of generality, that $x\geq y$. Then we have
\[
	2 \alpha^{x+y-2}
	<
	\alpha^{x+y}
	=
	\alpha^x+\alpha^y+1
	\leq
	2\alpha^x +1.
\]
This implies $x+y-2\leq x$ and thus $y\leq 2$. 

If $y=1$, then the equation becomes $\alpha^{x+1}=\alpha^x+\alpha +1$ which is equivalent to $\alpha^x=\frac{\alpha+1}{\alpha-1}$. Since $\frac{\alpha+1}{\alpha-1}=\alpha^3$, we get $x=3$.

If $y=2$, then the equation becomes $\alpha^{x+2}=\alpha^x+\alpha^2 +1$, which is equivalent to $\alpha^x=\frac{\alpha^2+1}{\alpha^2-1}$. But one can check that $\alpha<\frac{\alpha^2+1}{\alpha^2-1}<\alpha^2$, so there is no solution.

Therefore, if $x\geq y$, the only solution is $(3,1)$ and for $x<y$ we get the solution $(1,3)$.
\end{proof}

\begin{lem}\label{lem:mult_unabh}
	For $x\in \N$ the expressions $\frac{\alpha}{\beta}$ and $\frac{\beta^x+1}{\alpha^x+1}$ are multiplicatively dependent if and only if $x=1$, $x=3$ or $x$ is even.
\end{lem}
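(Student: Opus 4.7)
The plan is to reduce the question to a divisibility problem in the ring $\Z[\alpha]$. Observing that $\alpha\beta=-1$ gives $\alpha/\beta=-\alpha^2$, which is, up to sign, the fundamental unit; in particular the unit group is $\Z[\alpha]^\times=\{\pm\alpha^k:k\in\Z\}$. Write $\gamma:=(\beta^x+1)/(\alpha^x+1)$. The first step is to prove that $\alpha/\beta$ and $\gamma$ are multiplicatively dependent if and only if $\gamma\in\Z[\alpha]^\times$. For the non-trivial direction, from a relation $(\alpha/\beta)^a\gamma^b=1$ with $(a,b)\ne(0,0)$ one must have $b\ne 0$ (otherwise $|\alpha/\beta|^a=\alpha^{2a}=1$ forces $a=0$), so $\gamma^b$ is a unit; considering $v_{\mathfrak p}$ at every finite prime of $\Q(\sqrt 5)$ then forces $v_{\mathfrak p}(\gamma)=0$ for all $\mathfrak p$, so $\gamma$ itself is a unit. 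Conversely, if $\gamma=\pm\alpha^k$, then a short check gives $\gamma^{2}(\alpha/\beta)^{-k}=(-1)^k$, and squaring once more if necessary produces a non-trivial multiplicative relation.

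Next, use $\beta=-\alpha^{-1}$ to rewrite $\gamma$ solely in terms of $\alpha$. For $x$ even this immediately gives $\gamma=\alpha^{-x}$, which is a unit, so the dependence holds. For $x$ odd the same substitution yields
\[
\gamma=\alpha^{-x}\cdot\frac{\alpha^x-1}{\alpha^x+1}.
\]
Since $\alpha^{-x}$ is a unit, $\gamma$ lies in $\Z[\alpha]$ if and only if $(\alpha^x+1)\mid(\alpha^x-1)$, which (subtracting) is equivalent to $(\alpha^x+1)\mid 2$. Combined with the automatic identity $N(\gamma)=\gamma\,\sigma(\gamma)=1$, this is equivalent to $\gamma\in\Z[\alpha]^\times$.

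The remaining question is when $\alpha^x+1$ divides $2$ for $x$ odd, which is handled by norms. A necessary condition is $N(\alpha^x+1)\mid N(2)=4$, and for odd $x$ one computes $N(\alpha^x+1)=(\alpha^x+1)(\beta^x+1)=(-1)^x+L_x+1=L_x$, the $x$-th Lucas number. Because the sequence $(L_x)_{x\ge 1}=1,3,4,7,11,18,\dots$ is strictly increasing with $L_1=1$, $L_3=4$, $L_5=11$, the only odd indices satisfying $L_x\mid 4$ are $x=1$ and $x=3$. For these two values the condition is easily verified: $\alpha+1=\alpha^2$ is itself a unit (so trivially divides $2$), and $\alpha^3+1=2\alpha^2$ divides $2$ since $1/\alpha^2=\beta^2=2-\alpha\in\Z[\alpha]$.

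The main conceptual hurdle is the first reduction---passing from the abstract multiplicative-dependence statement to the concrete assertion that $\gamma$ is a unit in $\Z[\alpha]$. Once this reformulation is in place, the case split on the parity of $x$, the simplification via $\beta=-\alpha^{-1}$, and the norm-plus-Lucas-monotonicity argument are all routine.
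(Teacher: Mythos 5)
Your argument is correct, and it diverges from the paper's proof at the decisive step. The first reduction is the same in both: since $\alpha/\beta=-\alpha^2$ is a unit, multiplicative dependence is equivalent to $\gamma=\frac{\beta^x+1}{\alpha^x+1}$ being a unit of $\Z[\frac{1+\sqrt5}{2}]$ (the paper asserts this in one line; your valuation argument for the forward direction and the squaring trick for the converse make it precise). The even case is identical. For odd $x$, however, the paper writes $\frac{\alpha^x+1}{\beta^x+1}=\alpha^x\cdot\frac{\alpha^x+1}{\alpha^x-1}$, reduces to deciding when $\frac{\alpha^x+1}{\alpha^x-1}=\alpha^y$, and settles the resulting exponential equation $\alpha^{x+y}=\alpha^x+\alpha^y+1$ by the Archimedean size estimate of Lemma~\ref{lem:gleichung_alpha_hoch_x_plus_y}, which forces $y\le 2$ and leaves two cases to check. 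You instead convert the unit condition into the integrality condition $(\alpha^x+1)\mid 2$ (using that $N(\gamma)=\gamma\sigma(\gamma)=1$ holds automatically, so $\gamma$ is a unit as soon as it lies in the ring), and resolve that by the norm computation $N(\alpha^x+1)=(\alpha^x+1)(\beta^x+1)=L_x$ for odd $x$, where $L_x=\alpha^x+\beta^x$ is the Lucas number, together with the monotonicity of $(L_x)$ and the explicit checks $\alpha+1=\alpha^2$, $\alpha^3+1=2\alpha^2$. Your route is purely arithmetic, makes the auxiliary Lemma~\ref{lem:gleichung_alpha_hoch_x_plus_y} unnecessary, and pins down the exceptional indices through a single divisibility $L_x\mid 4$; the paper's route is more elementary (only real inequalities) but needs the extra lemma. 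Both are complete proofs of the stated equivalence.
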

\begin{proof}
	Since $\frac{\alpha}{\beta}=-\alpha^2$, we have to check for which $x$ there exist $k,l\in\Z$ not both zero such that $(-\alpha^2)^k=(\frac{\alpha^x+1}{\beta^x+1})^l$. Note that $\alpha$ is a fundamental unit in $\Z[\frac{1+\sqrt{5}}{2}]$ and, in particular, $-\alpha^2$ is a unit. 
%	So, if we have multiplicative dependency, then $\frac{\alpha^x+1}{\beta^x+1}$ must be a unit. On the other hand, every unit can be written as $\pm \alpha^t$, $t\in \Z$, and if $\frac{\alpha^x+1}{\beta^x+1}=\pm \alpha^t$ is a unit, then $(-\alpha^2)^k=(\pm \alpha^t)^l$ holds for $k=2t$ and $l=4$.
Thus, we have multiplicative dependency if and only if $\frac{\alpha^x+1}{\beta^x+1}$ is a unit.
	
	If $x$ is even, then since $\alpha^{-1}=-\beta$ we have $\beta^x=\alpha^{-x}$ and so
$
	\frac{\alpha^x+1}{\beta^x+1}
	=\alpha^x
$
is a unit.

Now let $x$ be odd. Then
$
	\frac{\alpha^x+1}{\beta^x+1}
	=\alpha^x \cdot \frac{\alpha^x+1}{\alpha^x-1}.
$
Since $\alpha^x$ is a unit, we need to check for which $x$ the expression $\frac{\alpha^x+1}{\alpha^x-1}$ is a unit, i.e. for which $x \in \N$ there exists a $y \in \Z$ such that $\frac{\alpha^x+1}{\alpha^x-1}=\pm \alpha^y$. Since $x\geq 1$, we have $\frac{\alpha^x+1}{\alpha^x-1}> 1$ and therefore we need to find all $x,y\geq1$ such that $\frac{\alpha^x+1}{\alpha^x-1}=\alpha^y$. This is equivalent to $\alpha^{x+y}=\alpha^x+\alpha^y+1$ and by Lemma~\ref{lem:gleichung_alpha_hoch_x_plus_y} the equation can be satisfied if and only if $x=1$ or $x=3$. 
\end{proof}

\begin{proof}[Proof of Proposition~\ref{pro:Bugeaud}]
We consider equation \eqref{eq:main} and rewrite it as
\begin{equation}\label{eq:dioph_rewritten}
	\frac{\alpha^m (\alpha^{n-m}+1)}{\sqrt{5}} -
	\frac{\beta^m (\beta^{n-m}+1)}{\sqrt{5}}
	= 2^{a_1}+2^{a_2}+2^{a_3}+2^{a_4}+2^{a_5}.
\end{equation}
For the right hand side we have
$v_2(2^{a_1}+\dots+2^{a_5})= a_5$.
In order to obtain a bound for $a_5$, we want to apply the theorem of Bugeaud-Laurent to the left hand side of \eqref{eq:dioph_rewritten}.
Note that $v_2(\sqrt{5})=v_2(\beta^m)=0$ and recall from Corollary~\ref{cor:v2_a_hoch_n_plus1} that $v_2(\alpha^{n-m}+1) \in \{0,1\}$. Therefore, multiplication by $\sqrt{5}$ and division by $\beta^m$ and $(\alpha^{n-m}+1)$ yield
\begin{equation}\label{eq:Bugeaud_abschaetzung}
	a_5 
	=
	v_2\left(
		\frac{\alpha^m (\alpha^{n-m}+1)}{\sqrt{5}} -
		\frac{\beta^m (\beta^{n-m}+1)}{\sqrt{5}}
	\right)	
	\leq v_2\left(
		\left(\frac{\alpha}{\beta}\right)^m-
		\frac{\beta^{n-m}+1}{\alpha^{n-m}+1}
	\right)+1.
\end{equation}
From Lemma~\ref{lem:mult_unabh} we know that $\gamma_1:=\frac{\alpha}{\beta}$ and $\gamma_2:=\frac{\beta^{n-m}+1}{\alpha^{n-m}+1}$ are multiplicatively independent if $n-m$ is odd and $n-m\neq 3$ (note that $n-m>1$). 
Also, $v_2(\gamma_1)=v_2(\gamma_2)=0$, so in these cases we can apply the theorem of Bugeaud-Laurent for $b_1=m$, $b_2=1$ and $p=2$. We have $\Q_2(\gamma_1,\gamma_2)=\Q_2(\sqrt{5})$, $f=2$ and $D=\frac{[\Q_2(\sqrt{5}):\Q_2]}{f}=1$.

In order to choose $A_1$ and $A_2$, we estimate the logarithmic heights of $\gamma_1$ and $\gamma_2$.
\begin{align*}
	h(\gamma_1)&=h\left(\frac{\alpha}{\beta}\right)
	\leq 2\, h(\alpha)= \log \alpha,\\
	h(\gamma_2)
	&=h\left(\frac{\beta^{n-m}+1}{\alpha^{n-m}+1}\right)
	\leq 2\, h ( \alpha^{n-m}+1 )\\
	&\leq 2\,((n-m)h(\alpha) + \log 2)
	=(n-m)\log \alpha + 2 \log 2.
\end{align*}
Thus we can set $A_1:=\log 2 \geq \max \left\{ h(\gamma_1), \frac{\log 2}{D} \right\}$ and $ A_2:=(n-m)\log \alpha + 2 \log 2\geq \max \left\{ h(\gamma_2), \frac{\log 2}{D} \right\}$.
Finally, we compute
\begin{align*}
	b'
	= \frac{b_1}{D A_2}+\frac{b_2}{D A_1}
	=\frac{m}{(n-m)\log \alpha+ 2 \log 2}
		+ \frac{1}{\log 2}
	< m + 1,
\end{align*}
where we took into account that $n-m\geq 2$ and $m\geq2$.
Now the theorem of Bugeaud-Laurent tells us that
\begin{align*}
	v_2\left(
		\left(\frac{\alpha}{\beta}\right)^m-
		\frac{\beta^{n-m}+1}{\alpha^{n-m}+1}
	\right)
	\leq
	&\frac{24\cdot 2 \cdot (2^2-1)}{(2-1)(\log 2)^4}\\
		&\cdot \left(\max\left\{
			\log( m + 1)+ \log \log 2 + 0.4,10\log 2,10\right\}
		\right)^2 \\
	&\cdot \log 2 \cdot ((n-m)\log \alpha + 2 \log 2).
\end{align*}
We compute, estimate and, noting that $m\geq 2$ and $n-m\geq 2$, obtain
\begin{align}\label{eq:Bugeaud_mult_unabh}
	v_2\left(
		\left(\frac{\alpha}{\beta}\right)^m-
		\frac{\beta^{n-m}+1}{\alpha^{n-m}+1}
	\right)
	&< 508 (\log(m+1)+9)^2(n-m).
\end{align}

Now we consider the cases where $\alpha_1$ and $\alpha_2$ are multiplicatively dependent, that is where $n-m=3$ or $n-m$ is even.

If $n-m=3$, then we have 
\[
	\frac{\beta^{n-m}+1}{\alpha^{n-m}+1}
	= \frac{\beta^3+1}{\alpha^3+1}
	=\alpha^{-4}
\]
and thus
\begin{align*}
	\left(\frac{\alpha}{\beta}\right)^m-
		\frac{\beta^{n-m}+1}{\alpha^{n-m}+1}
	= \left(-\alpha^2\right)^m -\alpha^{-4}
	=\alpha^{-4}\left(\pm\alpha^{2m+4}-1\right) .
\end{align*}
We note that $v_2(\alpha^{-4})=0$ and by Corollaries \ref{cor:v2_a_hoch_n_plus1} and 
 \ref{cor:v2_alphahochx_minus1_xgerade}
\begin{align*}
	v_2\left(\pm\alpha^{2m+4}-1\right)
	&\leq 
		\max \left\{
	v_2(\alpha^{2m+4}-1),v_2(-\alpha^{2m+4}-1)
		\right\}\\
	&= 
		\max \left\{
	v_2(\alpha^{2m+4}-1),v_2(\alpha^{2m+4}+1)
		\right\}\\
	&\leq
		\max \left\{
	v_2(2m+4)+1,1
		\right\}\\
	&=v_2(m+2)+2,
\end{align*}
so we obtain
\begin{equation}\label{eq:Bugeaud_gleich3}
	v_2\left(
		\left(\frac{\alpha}{\beta}\right)^m-
		\frac{\beta^{n-m}+1}{\alpha^{n-m}+1}
	\right)
	\leq 
	v_2(m+2)+2
	\leq
	\frac{\log(m+2)}{\log 2}+2.
\end{equation}

If $n-m$ is even, then
\begin{align*}
 	\left(\frac{\alpha}{\beta}\right)^m-
 		\frac{\beta^{n-m}+1}{\alpha^{n-m}+1}
	&=(-\alpha^2)^m-
		\frac{(\alpha^{-1})^{n-m}+1}{\alpha^{n-m}+1}
	=\pm \alpha^{2m} - \alpha^{-(n-m)}\\
	&= \alpha^{-(n-m)}(\pm \alpha^{n+m}-1).
\end{align*}
Analogously to the case $n-m=3$ we obtain
\begin{equation}\label{eq:Bugeaud_gerade}
	v_2\left(
		\left(\frac{\alpha}{\beta}\right)^m-
		\frac{\beta^{n-m}+1}{\alpha^{n-m}+1}
	\right)
	\leq 
	v_2(n+m)+1
	\leq
	\frac{\log (n+m)}{\log 2}+1.
\end{equation}

Finally, we compare the upper bounds from \eqref{eq:Bugeaud_mult_unabh}, \eqref{eq:Bugeaud_gleich3} and \eqref{eq:Bugeaud_gerade}. 
If we estimate  $\log(m+1)<\log n$ in \eqref{eq:Bugeaud_mult_unabh}, we obtain a bound that is grater than those in 
\eqref{eq:Bugeaud_gleich3} and \eqref{eq:Bugeaud_gerade} in any case. 
Combining this bound with \eqref{eq:Bugeaud_abschaetzung}, we obtain
\[
	a_5 < 508(\log n + 9)^2(n-m)+1.
\]

Since we are assuming that $n>1000$ and $\log 1000 \approx 6.9$, we can estimate $\log n +9 < 2.4 \log n$ and obtain
\[
	a_5 < 508(2.4 \log n)^2(n-m)+1
	< 2927 (\log n)^2(n-m).
\]
Thus Proposition~\ref{pro:Bugeaud} is proved.
\end{proof}

Now we are in the position to prove the following proposition.

\begin{pro}\label{pro:grosseSchranke}
		Assume that $(n,m,a_1,a_2,a_3,a_4,a_5)$ is a solution to Problem ~\ref{probl:diophantische_gleichung}. Then we have that $n<1.54 \cdot 10^{85}$.
\end{pro}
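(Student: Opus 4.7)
The plan is to combine the three ingredients available to us: the $\ll (\log n)^5$ bounds for $n-m$ and $a_1-a_5$ from Proposition~\ref{pro:Matveev}, the bound $a_5 \ll (\log n)^2 (n-m)$ from Proposition~\ref{pro:Bugeaud}, and the inequality $2^{a_1} > \alpha^n/6$ from \eqref{eq:ineq_2hocha_groesser_alphahochn}, in order to derive an implicit inequality of the shape $n < C \, (\log n)^7$ with an explicit constant $C$, from which the proposition follows by a monotonicity argument.

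First I would substitute the bound on $n-m$ into the bound on $a_5$. Since
\[
a_5 < 2927 (\log n)^2 (n-m) < 2927 \cdot 3.22\cdot 10^{65} \, (\log n)^7,
\]
we obtain $a_5 < 9.43 \cdot 10^{68} (\log n)^7$. Adding the (lower-order) bound $a_1 - a_5 < 3.22 \cdot 10^{65}(\log n)^5$ and absorbing it into the $(\log n)^7$ term using the assumption $n>1000$ (so that $(\log n)^2 > 47$), I would conclude that $a_1 < 9.44 \cdot 10^{68}(\log n)^7$, say. Taking logarithms in $2^{a_1} > \alpha^n / 6$ gives $n \log \alpha < a_1 \log 2 + \log 6$, and hence
\[
n < \frac{\log 2}{\log \alpha}\, a_1 + \frac{\log 6}{\log \alpha} < 1.36 \cdot 10^{69} \, (\log n)^7,
\]
after absorbing the small constant term again using $n>1000$.

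The remaining task is to turn this implicit bound into an explicit upper bound for $n$. This is the only step that requires some care: one notes that the function $x \mapsto x / (\log x)^7$ is increasing for $x$ larger than some modest threshold (well below $10^{10}$), so it suffices to check by direct computation that at the target value $x_0 = 1.54 \cdot 10^{85}$ one has $x_0 / (\log x_0)^7 > 1.36 \cdot 10^{69}$. Since $\log x_0 \approx 196.15$ gives $(\log x_0)^7 \approx 1.12 \cdot 10^{16}$ and $x_0 / (\log x_0)^7 \approx 1.38 \cdot 10^{69}$, monotonicity then forces $n < x_0 = 1.54 \cdot 10^{85}$.

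The main potential pitfall is bookkeeping: one has to be careful that the various small additive and multiplicative constants (the $+\log 6$, the $a_1-a_5$ contribution, the rounding in the constant $2927$ and in $3.22\cdot 10^{65}$) are all absorbed cleanly into a single constant $C$ for the $(\log n)^7$ bound, otherwise the final numerical comparison at $x_0 = 1.54 \cdot 10^{85}$ might fail by a small factor. The derivation itself is otherwise a straightforward chain of substitutions from the two preceding propositions together with the elementary relation between $a_1$ and $n$.
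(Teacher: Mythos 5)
Your proposal is correct and follows essentially the same route as the paper: substitute the bound on $n-m$ from Proposition~\ref{pro:Matveev} into Proposition~\ref{pro:Bugeaud}, add the bound on $a_1-a_5$, and feed the resulting estimate $a_1 \ll (\log n)^7$ into inequality \eqref{eq:ineq_2hocha_groesser_alphahochn} to get $n < C(\log n)^7$, which is then solved numerically. Your constants ($1.36\cdot 10^{69}$ versus the paper's $1.37\cdot 10^{69}$) and your explicit monotonicity check of $x/(\log x)^7$ at $x_0=1.54\cdot 10^{85}$ are both fine.
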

\begin{proof}
From inequality \eqref{eq:ineq_2hocha_groesser_alphahochn} we obtain $a_1 \log 2 > -\log 6 + n \log \alpha $. Rewriting this inequality and estimating we get $n-4<1.45 a_1$. Now we estimate further applying Propositions \ref{pro:Matveev} and \ref{pro:Bugeaud}:
\begin{align*}
	n-4 
	&< 
	1.45 \cdot (a_5 + (a_1-a_5))\\
	&<
	1.45\cdot (2927\cdot(\log n)^2(n-m+1) + 3.22 \cdot 10^{65} \cdot (\log n)^5)\\
	&<
	1.45 \cdot (2927\cdot (\log n)^2\cdot 3.22 \cdot 10^{65}\cdot (\log n)^5 + 3.22 \cdot 10^{65} \cdot (\log n)^5)\\
	&<
	1.45 \cdot 9.43 \cdot 10^{68} (\log n)^7,
\end{align*}
which gives us the inequality $n<1.37 \cdot 10^{69} (\log n)^7$. Solving this inequality we obtain $n<1.54 \cdot 10^{85}$.
\end{proof}

\section{Reducing the bound for $n$}\label{sec:reducing_bound}

In this section we first reduce the bounds for $n-m$ and $a_1-a_2, \ldots , a_1-a_5$ using the Baker-Davenport reduction method. After that, we reduce the bound for $a_5$ with a $p$-adic reduction method. This leads to a small bound for $n$. We start with the following proposition.

\begin{pro}\label{pro:Baker-Davenport}
	Assume that $(n,m,a_1,a_2,a_3,a_4,a_5)$ is a solution to Problem~\ref{probl:diophantische_gleichung}. Then we have that $n-m\leq 470$, $a_1-a_2\leq 300$, $a_1-a_3\leq 308$, $a_1-a_4\leq 315$ and $a_1-a_5\leq 321$. 
\end{pro}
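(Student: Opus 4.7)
The plan is to carry out the same five-step case analysis used in the proof of Proposition~\ref{pro:Matveev}, but to replace each invocation of Matveev's theorem by the Baker-Davenport reduction provided by Theorem~\ref{thm:baker-davenport}. I start from the upper bound $M_0 := 1.54\cdot 10^{85}$ for $n$ given by Proposition~\ref{pro:grosseSchranke}.

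To set up each reduction, I would take logarithms of \eqref{eq:ungl:matveev_sit1_zwischenergebnis} in Situation~1, or of \eqref{eq:ungl_matveev_sit2} in Situation~2. Using the standard estimate $|\log(1+x)|\leq 2|x|$ for small $|x|$ and dividing through by $\log 2$, each inequality becomes
\[
	\left| n\,\gamma - a_1 + \mu_k \right|
	< A\cdot B^{-w},
\]
with $\gamma:=\log\alpha/\log 2$ (irrational), $A$ an absolute constant, and $(B,w)$ equal to $(\alpha,n-m)$ or $(2,a_1-a_{k+1})$ according to which term of the maximum dominates. The shift $\mu_k$ depends only on $\sqrt{5}$, on $\alpha^{m-n}$ (in Situation~2), and on the already bounded small parameters $a_1-a_j$ with $2\leq j\leq k$. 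Letting $M$ be the current upper bound on $n$, I would compute a convergent $p/q$ of the continued fraction of $\gamma$ with $q>6M$; for each admissible choice of $\mu_k$ I would then evaluate $\varepsilon := \|\mu_k q\| - M\|\gamma q\|$ and, whenever $\varepsilon>0$, invoke Theorem~\ref{thm:baker-davenport} to deduce an upper bound for $w$ that is only logarithmic in $M$.

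Executing all five steps in the ordering dictated by Proposition~\ref{pro:Matveev} produces, in one pass, preliminary bounds of a few hundred in size for each of $n-m, a_1-a_2,\dots,a_1-a_5$. Feeding these back into the procedure, now with a drastically smaller $M$ and with the finitely many values of $\mu_k$ enumerated explicitly, a second pass sharpens them further and yields the precise bounds stated in the proposition.

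The main obstacle is purely computational. In step $k$ the constant $\mu_k$ depends on the full tuple $(a_1-a_2,\dots,a_1-a_k)$, and in Situation~2 also on $n-m$, so for each admissible tuple one must verify $\varepsilon>0$ and, in the rare cases when it fails, retry with the next convergent of $\gamma$. The number of such tuples grows multiplicatively with $k$, which is exactly the bottleneck highlighted in the introduction and the reason the $p$-adic method developed in the next section must be brought in to handle the remaining exponent $a_5$ without adding a sixth reduction step.
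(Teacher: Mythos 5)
Your overall strategy matches the paper's: rerun the five-step case analysis of Proposition~\ref{pro:Matveev}, replacing Matveev's theorem by Theorem~\ref{thm:baker-davenport} with $M=1.54\cdot 10^{85}$, $\gamma=\log\alpha/\log 2$, and a shift $\mu$ enumerated over the already-bounded small parameters. However, there is a genuine gap in how you dispose of the cases where $\eps>0$ fails. You propose to ``retry with the next convergent of $\gamma$,'' but for certain tuples in Situation~2 this can never succeed: when $\mu'=\frac{1+\alpha^{-(n-m)}}{\sqrt{5}(1+\dots+2^{-(a_1-a_k)})}$ happens to equal $2^r\alpha^{-s}$ (e.g.\ $n-m=2$ gives $\mu'=\alpha^{-1}$, $n-m=6$ gives $2\alpha^{-3}$, and there are further tuples in Situations 2.2--2.4), the shift is $\mu=r-s\gamma$, so $\|\mu q\|=\|s\gamma q\|\leq s\|\gamma q\|$ with $s\leq 15\ll M$, and hence $\eps=\|\mu q\|-M\|\gamma q\|<0$ for \emph{every} convergent. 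The failure is structural (the linear form degenerates to the homogeneous form $(n-s)\gamma-(a_1-r)$), not an accident of the chosen $q$. The paper handles these finitely many degenerate tuples by a separate Legendre-type argument: if $a_1-a_{k+1}>293$ then $\left|\gamma-\frac{a_1-r}{n-s}\right|<\frac{1}{2(n-s)^2}$ forces $\frac{a_1-r}{n-s}$ to be a convergent $p_j/q_j$ with $q_j<2.7\cdot 10^{86}$, whence $a_1-a_{k+1}<\frac{\log 7.22+\log(q_{167}+q_{168})}{\log 2}<291$. Without some such argument your reduction does not close in these cases.

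A secondary point: your proposed ``second pass with a drastically smaller $M$'' does not work as described, because $M$ in Theorem~\ref{thm:baker-davenport} must bound $u=n$, and the first pass reduces only $n-m$ and the $a_1-a_j$, not $n$ itself (the bound on $n$ is only improved at the very end, after the $p$-adic step bounds $a_5$). What the first reductions do buy you, and what the paper actually exploits by processing the situations in the order $1.0,1.1,2.1,1.2,2.2,\dots$, is a small finite list of admissible $\mu$'s at each subsequent step; the stated bounds $n-m\leq 470$, $a_1-a_2\leq 300$, \dots, $a_1-a_5\leq 321$ are then read off as the maxima over all branches of this single pass.
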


\begin{proof}
The proof of Proposition~\ref{pro:Baker-Davenport} uses the Baker-Davenport reduction method in the same way as Bravo and Luca \cite{Bravo_Luca} and Chim and Ziegler \cite{Chim_Ziegler} did.
The idea is to repeat the steps from the proof of Proposition~\ref{pro:Matveev}, but  instead of Matveev's Theorem we apply Theorem~\ref{thm:baker-davenport} obtaining a new smaller bound for one of the expressions $n-m, a_1-a_2, \dots, a_1-a_5$ in each step. As before, in each step we find ourselves in one of the following two situations.
\begin{itemize}
\item \textit{Situation 1:} We have found small bounds $a_1-a_2 , \dots , a_1-a_k$ ($1 \leq k \leq 5$) but not yet for $n-m$.
\item \textit{Situation 2:} We have found small bounds for $n-m$ and $a_1-a_2, \dots, a_1-a_k$ ($1\leq k \leq 4$).
\end{itemize}
We consider each situation separately.
\\

\noindent \textit{Situation 1:}
We recall inequality \eqref{eq:ungl:matveev_sit1_zwischenergebnis} from Situation 1 in the proof of Proposition~\ref{pro:Matveev}:
\[
	\left|
		\frac{\alpha^n}
		{\sqrt{5}\ 2^{a_1}(1+ \dots + 2^{a_k-a_1})}
	-1
	\right|
	<
	8 \cdot \max \left\{ \alpha^{-(n-m)},2^{-(a_1-a_{k+1})} \right\}.
\]
We set $\Lambda:=\log \left( 
		\frac{\alpha^n}
		{\sqrt{5}\ 2^{a_1}(1+ \dots + 2^{a_k-a_1})}
		\right)$,
so we have 	
\[
	\left| e^\Lambda -1 \right| \leq
	8 \cdot \max \left\{ \alpha^{-(n-m)},2^{-(a_1-a_{k+1})} \right\}.
\]
Note that the inequality $|x|\leq 2 |e^x-1|$ holds for all $x\geq -1$.
If $\Lambda<-1$, then 
$0.63 < \left| e^{\Lambda}-1 \right| \leq
	8 \cdot \max \left\{ \alpha^{-(n-m)},2^{-(a_1-a_{k+1})} \right\}$, which implies 
$ \min\{(n-m)\log \alpha, (a_1-a_{k+1})\log 2\} <\log 8 - \log 0.63 <2.55$ and we are done. 
If $\Lambda\geq -1$, then we have
$\frac{1}{2}|\Lambda| 
\leq 
\left| e^{\Lambda}-1 \right| 
\leq 8 \cdot \max \left\{ \alpha^{-(n-m)},2^{-(a_1-a_{k+1})} \right\}
$,
which implies
\begin{multline*}
	\left|
	n \log \alpha -a_1 \log 2 - 
	\log\left(\sqrt{5}(1+\dots + 2^{a_k-a_1})\right)
	\right| \\
	<
	16 \cdot \max \left\{ \alpha^{-(n-m)},2^{-(a_1-a_{k+1})} \right\}.
\end{multline*}
Division by $\log 2$ yields
\begin{multline*}
	\left|
	n \cdot \frac{ \log \alpha}{\log 2} -a_1 +
	\frac{-\log\left(\sqrt{5}(1+\dots + 2^{-(a_1-a_k)})\right)}{\log 2}
	\right| \\
	<
	23.09 \cdot \max \left\{ \alpha^{-(n-m)},2^{-(a_1-a_{k+1})} \right\}.
\end{multline*}

Distinguishing between different cases, we then apply Theorem~\ref{thm:baker-davenport} by setting 
\begin{align*}
	M &:=1.54 \cdot 10^{85}>n, \\
	\gamma &:=\frac{\log \alpha}{\log 2}, \\
	\mu&=\mu_{a_1-a_2,\dots,a_1-a_k} :=\frac{-\log\left(\sqrt{5}(1+\dots + 2^{a_k-a_1})\right)}{\log 2}, \\
	A &:= 23.09 \quad \mbox{and} \\
	B &:=2 \quad \mbox{or} \quad B:= \alpha,
\end{align*}
where the value of $B$ depends on whether $2^{-(a_1-a_{k+1})}\geq \alpha^{-(n-m)}$ or $2^{-(a_1-a_{k+1})}< \alpha^{-(n-m)}$.
Note that $\gamma$ is irrational because $\alpha$ and $2$ are multiplicatively independent.
Depending on $k$ and the bounds for $a_1-a_2, \dots , a_1-a_k$ we have to check a certain number of $\mu$'s.
For each $\mu$ we search for a convergent $p/q$ to $\gamma$ with $q>6M$  such that $\eps=\left\|\mu q \right\| - M \left\|\gamma q \right\| >0$. Then we obtain $a_1-a_{k+1} \leq \frac{\log (Aq/\eps)}{\log B}$ or $n-m \leq \frac{\log (Aq/\eps)}{\log B}$. The maximum of these bounds for all $\mu$'s is a bound for $a_1-a_{k+1}$ or $n-m$ respectively.
\\

\noindent \textit{Situation 2:}
Suppose we have found small bounds for $n-m$ and $a_1-a_2, \dots , a_1-a_k$ ($1\leq k \leq 4$).
We recall inequality \eqref{eq:ungl_matveev_sit2} from Situation 2 in the proof of Proposition~\ref{pro:Matveev}:
\[
	\left|
		\frac{\alpha^n(1 +\alpha^{m-n})}
		{\sqrt{5}\ 2^{a_1}(1+ \dots + 2^{a_k-a_1})}
	-1
	\right|
	<
	2.5 \cdot 2^{-(a_1-a_{k+1})}.
\]
Similarly as in Situation 1, we set $\Lambda:=\log \left( 
		\frac{\alpha^n(1 +\alpha^{m-n})}
		{\sqrt{5}\ 2^{a_1}(1+ \dots + 2^{a_k-a_1})}
		\right)$ and obtain
\begin{equation}\label{eq:BD_Ungl_Sit2}
	\left|
	n \cdot \frac{ \log \alpha}{\log 2} -a_1 + 
		\frac{		
		\log\left(
			\frac{1+\alpha^{-(n-m)}}{\sqrt{5}\left(1+\dots + 2^{-(a_k-a_1)}\right)}
		\right) }{\log 2}
	\right|
	<
	7.22 \cdot 2^{-(a_1-a_{k+1})}.
\end{equation}
As in Situation 1, we then try to apply Theorem~\ref{thm:baker-davenport} by setting 
\begin{align*}
	M &:=1.54\cdot 10^{85}>n, \\
	\gamma &:=\frac{\log \alpha}{\log 2}, \\
	\mu&=\mu_{m-n,a_1-a_2,\dots,a_1-a_k}:=
			\frac{\log\left(
			\frac{1+\alpha^{-(n-m)}}{\sqrt{5}(1+\dots + 2^{-(a_1-a_k)})}
		\right) }{\log 2}, \\
	A &:= 7.22 \ \mbox{and} \ B:=2.
\end{align*}  
For each $\mu$ we search for a convergent $p/q$ to $\gamma$ with $q>6M$ such that $\eps=\left\|\mu q \right\| - M \left\|\gamma q \right\| >0$. If we are successful, we obtain $a_1-a_{k+1} \leq \frac{\log (Aq/\eps)}{\log B}$ for that $\mu$. For some $\mu$'s however, it is impossible to find such an $\eps$ because of linear dependencies between $\gamma$, $1$ and $\mu$. We will treat these cases separately to obtain overall small upper bounds for $a_1-a_{k+1}$.

Figure~\ref{fig:BD-mitSchranken} shows all cases of the proof and all steps we need to do. We proceed top to bottom, that is, we handle the situations in the following order: 1.0, 1.1, 2.1, 1.2, 2.2, 1.3, 2.3, 1.4, 2.4. In each situation we check all possible parameters, i.e. each parameter runs up to a small bound that we have obtained in an earlier step.
In situations where the bounds of the parameters depend on the case, we check the largest possible number of parameters. E.g. in Situation 2.2 we set the bound for $n-m$ as the maximum of the bounds for $n-m$ obtained in Situation 1.0 and Situation 1.1 and we set the bound for $a_1-a_2$ as the maximum of the bounds for $a_1-a_2$ obtained in Situation 1.0 and Situation 2.1. Note that we always have the conditions $n-m\geq2$ and $a_1-a_2<a_1-a_3<a_1-a_4<a_1-a_5$. Running all these computations on a computer, we obtain the results presented in Figure~\ref{fig:BD-mitSchranken}, where we ignored some special cases. The computations were executed using Sage \cite{sagemath} and
took less than six hours on a computer with an Intel Core i7-8700 using
six cores.

\begin{figure}
\begin{tikzpicture}[auto,node distance=1.5cm]
%%%%%%%%%%%% Sit1
\mybox{sit10}{}{1.0}{none}{$a_1-a_2 \leq$ \textbf{\bIdOa} \newline or $n-m \leq$ \textbf{\bIdOn}}

\mybox{sit11}{below =of sit10}{1.1}
{$a_1-a_2 \leq \mIdIaII$}
{$a_1-a_3 \leq$ \textbf{\bIdIa} \newline or $n-m \leq$ \textbf{\bIdIn}}

\mybox{sit12}{below =of sit11}{1.2}
{$a_1-a_2 \leq \mIdIIaII$ \newline $a_1-a_3 \leq \mIdIIaIII$}
{$a_1-a_4 \leq$ \textbf{\bIdIIa} \newline
or $n-m \leq$\textbf{\bIdIIn}}

\mybox{sit13}{below =of sit12}{1.3}
{$a_1-a_2 \leq \mIdIIIaII$ \newline $a_1-a_3\leq \mIdIIIaIII$ \newline $a_1-a_4 \leq \mIdIIIaIV$}
{$a_1-a_5 \leq$ \textbf{\bIdIIIa} \newline or $n-m \leq$ \textbf{\bIdIIIn}}

\mybox{sit14}{below =of sit13}{1.4}{$a_1-a_2 \leq \mIdIVaII$ \newline $a_1-a_3\leq \mIdIVaIII$ \newline $a_1-a_4\leq \mIdIVaIV$ \newline $a_1-a_5\leq \mIdIVaV$}{$n-m\leq$ \textbf{\bIdIVn}}

%%%%%%%%%% Sit2
\mybox{sit21}{right = of sit11}{2.1}{$n-m \leq \mIIdIm$}{$a_1-a_2 \leq$ \textbf{\bIIdI}}

\mybox{sit22}{right = of sit12}{2.2}{$n-m \leq \mIIdIIm$ \newline $a_1-a_2 \leq \mIIdIIaII$}
{$a_1-a_3 \leq$ \textbf{\bIIdII}}

\mybox{sit23}{right = of sit13}{2.3}{$n-m \leq \mIIdIIIm$ \newline $a_1-a_2 \leq \mIIdIIIaII$ \newline $a_1-a_3 \leq \mIIdIIIaIII$}{$a_1-a_4 \leq$ \textbf{\bIIdIII}}

\mybox{sit24}{right = of sit14}{2.4}{$n-m\leq \mIIdIVm$ \newline $a_1-a_2\leq \mIIdIVaII$ \newline $a_1-a_3\leq \mIIdIVaIII$ \newline $a_1-a_4\leq \mIIdIVaIV$}{$a_1-a_5\leq$ \textbf{\bIIdIV}}

%%%%%%%%%%%% Pfeile
%% nach rechts
\draw [-latex,thick] (sit10) --  node [fill=white, anchor=center, pos=0.5] {\small{\qquad $2^{-(a_1-a_2)} < \alpha^{-(n-m)}$}} (sit21) ;
\draw [-latex,thick] (sit11) --  node [fill=white, anchor=center, pos=0.5] {\small{\quad $2^{-(a_1-a_3)} < \alpha^{-(n-m)}$}} (sit22) ;
\draw [-latex,thick] (sit12) --  node [fill=white, anchor=center, pos=0.5] {\small{\quad $2^{-(a_1-a_4)} < \alpha^{-(n-m)}$}} (sit23) ;
\draw [-latex,thick] (sit13) --  node [fill=white, anchor=center, pos=0.5] {\small{\quad $2^{-(a_1-a_5)} < \alpha^{-(n-m)}$}} (sit24) ;
%% sit1 nach unten
\draw [-latex,thick] (sit10) --  node [fill=white, anchor=center, pos=0.5] {\small{$2^{-(a_1-a_2)} \geq \alpha^{-(n-m)}$}} (sit11) ;
\draw [-latex,thick] (sit11) --  node [fill=white, anchor=center, pos=0.5] {\small{$2^{-(a_1-a_3)} \geq \alpha^{-(n-m)}$}} (sit12) ;
\draw [-latex,thick] (sit12) --  node [fill=white, anchor=center, pos=0.5] {\small{$2^{-(a_1-a_4)} \geq \alpha^{-(n-m)}$}} (sit13) ;
\draw [-latex,thick] (sit13) --  node [fill=white, anchor=center, pos=0.5] {\small{$2^{-(a_1-a_5)} \geq \alpha^{-(n-m)}$}} (sit14) ;
%% Sit2 nach unten
\draw [-latex,thick] (sit21) --  node [right] {} (sit22) ;
\draw [-latex,thick] (sit22) --  node [right] {} (sit23) ;
\draw [-latex,thick] (sit23) --  node [right] {} (sit24) ;

\end{tikzpicture}

\caption{Results of the reduction of bounds}
\label{fig:BD-mitSchranken}
\end{figure}
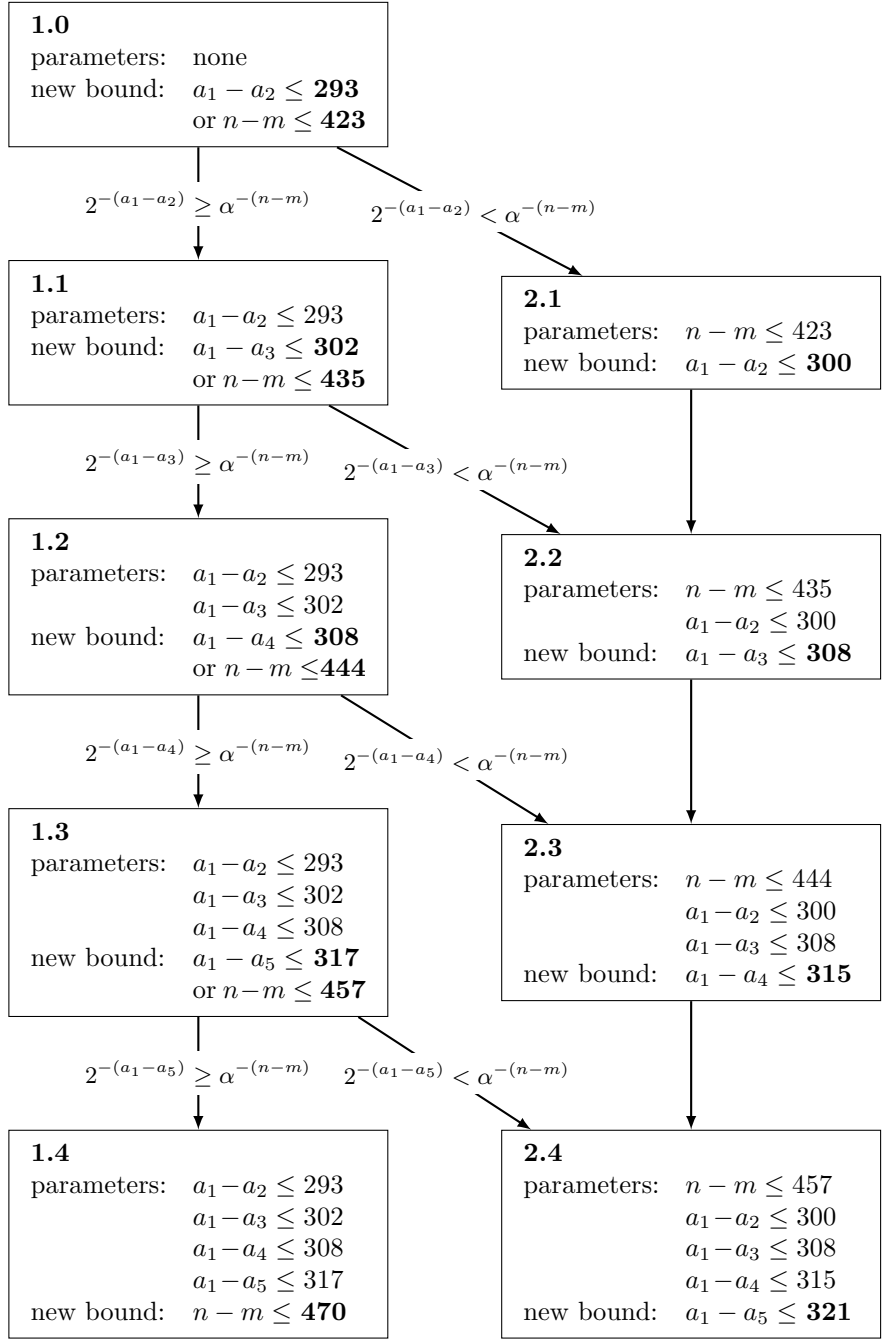
In Situations 2.1, 2.2, 2.3 and 2.4 we cannot apply Theorem~\ref{thm:baker-davenport} for all instances because of linear dependencies. 
These special cases are the following.
\begin{description}
	\item[Situation 2.1] $n-m = 2$ and $n-m=6$
	\item[Situation 2.2] $(n-m,a_1-a_2)=(10,2)$ and $(n-m,a_1-a_2)=(18,4)$
	\item[Situation 2.3] $(n-m,a_1-a_2,a_1-a_3)=(14,1,3)$
	\item[Situation 2.4] $(n-m,a_1-a_2,a_1-a_3,a_1-a_4)=(22,2,3,6)$ and $(n-m,a_1-a_2,a_1-a_3,a_1-a_4)=(30,8,4,3)$.
\end{description}
We now treat these cases separately and show that in each case we can still obtain $a_1-a_{k+1}\leq 293$.
We consider 
\[
	\mu'_{n-m,a_1-a_2,\dots, a_1-a_k}:= \frac{1+\alpha^{-(n-m)}}{\sqf\left(1+ 2^{-(a_1-a_2)} +\dots + 2^{-(a_1-a_k)}\right)}.
\] 

For each special case it is possible to rewrite $\mu'$ as the product of a power of 2 and a power of $\alpha$. For instance, $\mu_2'=\frac{1+\alpha^{-1}}{\sqrt5}=\alpha^{-1}$ and $\mu_6'=\frac{1+\alpha^{-6}}{\sqrt5}=2\alpha^{-3}$.
It turns out that each
$\mu'$ is of the form
\[
	\mu'=2^r \alpha^{-s} \beistr
	0\leq r \leq 9 \beistr
	1 \leq s \leq 15,
\]
and inequality \eqref{eq:BD_Ungl_Sit2} becomes
\[
	\left| n \gamma - a_1 + \log(2^r \alpha^{-s})/\log 2
	\right|
	\leq 
	7.22 \cdot 2^{-(a_1-a_{k+1})},
\]
where $\gamma = \frac{\log \alpha}{\log 2}$. Since
\[
	\left| n \gamma - a_1 + \frac{r\cdot\log 2 - s\cdot \log \alpha}{\log 2}
	\right|
	=
	\left| n \gamma - a_1 + r - s\gamma
	\right|
	= \left| (n-s)\gamma - (a_1-r) \right|,
\]
dividing by $n-s$ we obtain
\begin{equation}\label{eq:ungl:kettenbruch_nach_oben}
	\left|\gamma - \frac{a_1-r}{n-s}  \right|
	\leq 
	\frac{7.22}{2^{a_1-a_{k+1}}\cdot(n-s)}.
\end{equation}
If $a_1-a_{k+1}\leq 293$, then we do not need to do anything. 
Assume $a_1-a_{k+1}> 293$. Recall that by Proposition~\ref{pro:grosseSchranke} we have $n<1.54 \cdot 10^{85}$, so $2^{a_1-a_{k+1}}\geq 2^{294}>3.1\cdot 10^{88}>10^2 (n-s)$. Thus
\[
	\left|\gamma - \frac{a_1-r}{n-s}  \right|
	<
	\frac{1}{2\cdot (n-s)^2},
\]
which by the theory of continued fractions (see e.g. \cite[p. 47]{Baker_IntroductionToTheoryOfNumbers})
implies that $\frac{a_1-r}{n-s}$ is a convergent to $\gamma$, i.e. $\frac{a_1-r}{n-s}=\frac{p_j}{q_j}$ for some $j$. Since $q_{168}>2.7\cdot 10^{86}>n-s=q_j$, we have $j\in \{0,1,2,\dots,167\}$. Using a property of continued fractions (see e.g. \cite[p. 47]{Baker_IntroductionToTheoryOfNumbers}) and  inequality \eqref{eq:ungl:kettenbruch_nach_oben} we get
\[
	\frac{1}{q_j(q_j+q_{j+1})}
	<
	\left|\gamma - \frac{q_j}{q_j} \right|
	=
	\left|\gamma - \frac{a_1-r}{n-s}  \right|
	<
	\frac{7.22}{2^{a_1-a_{k+1}}\cdot(n-s)}
	=	
	\frac{7.22}{2^{a_1-a_{k+1}}\cdot q_j},
\]
which implies
\[
	a_1-a_{k+1}
	<
	\frac{\log 7.22 + \log(q_j+q_{j+1})}{\log 2}
	\leq
	\frac{\log 7.22 + \log(q_{167}+q_{168})}{\log 2}
	<
	291.
\]
Thus $a_1-a_{k+1}\leq 293$ and
the bounds in Figure~\ref{fig:BD-mitSchranken} are correct. Taking the maximum over the bounds for each expression $n-m$, $a_1-a_2$, $a_1-a_3$, $a_1-a_4$ and $a_1-a_5$, we obtain the bounds from Proposition~\ref{pro:Baker-Davenport}.
\end{proof}

Next, we reduce the bound for $a_5$ obtained in Proposition~\ref{pro:Bugeaud} using a $p$-adic reduction method.

\begin{pro}\label{pro:Pethoe-DeWeger}
	Assume that $(n,m,a_1,a_2,a_3,a_4,a_5)$ is a solution to Problem~\ref{probl:diophantische_gleichung}. Then we have that $a_5\leq295$.
\end{pro}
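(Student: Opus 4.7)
My plan is to apply the $p$-adic reduction technique of Peth\H o and de Weger~\cite{Pethoe_deWeger} to the key inequality appearing in the proof of Proposition~\ref{pro:Bugeaud}:
\[
	a_5 \leq v_2\!\left( \left(\frac{\alpha}{\beta}\right)^m - \frac{\beta^{n-m}+1}{\alpha^{n-m}+1} \right) + 1.
\]
Since by Proposition~\ref{pro:Baker-Davenport} we have $n-m \leq 470$, I enumerate over $\delta := n-m$. The cases $\delta$ even and $\delta = 3$ were already analyzed in the proof of Proposition~\ref{pro:Bugeaud}: there the expression on the right simplifies to $\alpha^{-\delta}(\pm\alpha^{n+m}-1)$ or $\alpha^{-4}(\pm\alpha^{2m+4}-1)$, and Corollaries~\ref{cor:v2_a_hoch_n_plus1} and~\ref{cor:v2_alphahochx_minus1_xgerade} give $a_5 \leq v_2(n+m)+2$ or $a_5 \leq v_2(m+2)+3$ respectively. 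Since $n < 1.54\cdot 10^{85} < 2^{284}$ by Proposition~\ref{pro:grosseSchranke}, each such case already yields $a_5 \leq 287$.

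For the remaining cases ($\delta$ odd, $\delta \geq 5$) I work in the ring of integers of $\Q_2(\sqrt 5)$. Since $(\alpha/\beta)^6 = \alpha^{12}$ and $v_2(\alpha^{12}-1) = 3$ by Corollary~\ref{cor:v2_alphahochx_minus1_xgerade}, the $2$-adic logarithm $\log_2\alpha^{12}$ converges and has valuation exactly $3$. Writing $m = 6q + r$ with $0 \leq r < 6$, the congruence $(\alpha/\beta)^m \equiv \gamma_\delta \pmod{2^{a_5-1}}$, where $\gamma_\delta := (\beta^\delta+1)/(\alpha^\delta+1)$, rewrites as
\[
	\alpha^{12q} \equiv \tilde\gamma_{\delta,r} \pmod{2^{a_5-1}}, \qquad \tilde\gamma_{\delta,r} := \gamma_\delta \, (\alpha/\beta)^{-r}.
\]
For each pair $(\delta,r)$ I first compute $\tilde\gamma_{\delta,r} \bmod 8$: if $\tilde\gamma_{\delta,r} \not\equiv 1 \pmod 8$, then $a_5 \leq v_2(\tilde\gamma_{\delta,r} - 1) + 1 \leq 3$ and we are done. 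Otherwise, taking $2$-adic logarithms yields $q\log_2\alpha^{12} \equiv \log_2\tilde\gamma_{\delta,r} \pmod{2^{a_5-1}}$. The key observation is that the nontrivial automorphism $\sqrt 5 \mapsto -\sqrt 5$ sends both $\alpha^{12}$ and $\tilde\gamma_{\delta,r}$ to their inverses, so both logarithms lie in $\sqrt 5 \cdot \Q_2$, and hence their quotient $\theta_{\delta,r} := \log_2\tilde\gamma_{\delta,r}/\log_2\alpha^{12}$ is a $2$-adic integer.

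Dividing the log congruence by $\log_2\alpha^{12}$, I obtain the scalar congruence $q \equiv \theta_{\delta,r} \pmod{2^{a_5-4}}$ in $\Z_2$, with $q$ bounded by $M := \lceil n/6\rceil < 2^{282}$. Following Peth\H o--de Weger, I compute $\theta_{\delta,r}$ to high $2$-adic precision $K$, represent the truncation as a rational $\theta_K/2^K \in [0,1)$, and expand it in continued fractions; their lemma then bounds $a_5 - 4$ in terms of $\log_2 M$ and the largest partial quotient of the convergents with denominators up to $M$. Running this procedure for each of the roughly $233 \times 6$ pairs $(\delta,r)$ with $5 \leq \delta \leq 469$ odd, and taking the maximum, should produce a bound $\leq 295$.

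The principal obstacle I anticipate is the handful of degenerate pairs $(\delta,r)$ in which the continued fraction expansion of $\theta_{\delta,r}$ has an anomalously large partial quotient below $M$, or in which $\theta_{\delta,r}$ is accidentally very close to a rational of small denominator---the $p$-adic analogue of the ``linear dependency'' special cases at the end of the proof of Proposition~\ref{pro:Baker-Davenport}. Each such instance must be inspected by hand, exploiting any extra $2$-adic structure of $\tilde\gamma_{\delta,r}$ to reduce to a direct computation, and verified to still satisfy $a_5 \leq 295$.
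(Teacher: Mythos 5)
Your overall architecture is the one the paper uses: start from the inequality $a_5 \leq v_2\bigl((\alpha/\beta)^m - (\beta^{n-m}+1)/(\alpha^{n-m}+1)\bigr)+1$ from the proof of Proposition~\ref{pro:Bugeaud}, enumerate $n-m\leq 470$, pass to $2$-adic logarithms, observe via the conjugation $\sqrt{5}\mapsto-\sqrt{5}$ that the quotient of the two logarithms lies in $\Q_2$, and convert everything into a congruence for the unknown exponent modulo $2^{a_5-c}$, to be handled by the Peth\H{o}--de Weger reduction. Your variations (treating $n-m$ even and $n-m=3$ by the explicit formulas already derived in Proposition~\ref{pro:Bugeaud}, and replacing $m$ by $q=\lfloor m/6\rfloor$ so that the logarithm series visibly converges on $\alpha^{12}$) are harmless and in places more careful than the paper, which applies $\log_2$ to $(\beta/\alpha)^m$ directly.

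The step that would fail as written is the final reduction. You propose to truncate $\theta_{\delta,r}$ to a rational $\theta_K/2^K$, expand it in \emph{continued fractions}, and bound $a_5-4$ by the largest partial quotient. That is the recipe for the Archimedean (Baker--Davenport) situation, or for a homogeneous $p$-adic form with two unknown integer coefficients; it does not apply to the one-variable congruence $q\equiv\theta_{\delta,r}\pmod{2^{a_5-4}}$ with $0\leq q\leq M$, because a $2$-adic congruence says nothing about Archimedean rational approximation of $\theta_{\delta,r}$. What one actually does (and what the paper does) is read off the $2$-adic digit expansion $\theta=u_0+u_1\cdot2+u_2\cdot2^2+\cdots$: choose $r$ with $2^r>M$ and let $R\geq r$ be the position of the first nonzero digit at or beyond $r$. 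If $a_5-4>R$, then $q$ would be congruent modulo $2^{R+1}$ to the truncation $u_0+\cdots+u_R\cdot 2^R\geq 2^R>M$, contradicting $q\leq M$; hence $a_5\leq R+4$, and the computation consists of locating $R$ for each pair $(\delta,r)$. With this correction your argument goes through and is essentially the paper's proof. Two smaller points: the integrality of $\theta_{\delta,r}$ follows not from the Galois argument (which only gives $\theta_{\delta,r}\in\Q_2$) but from $v_2(\log_2\tilde\gamma_{\delta,r})=v_2(\tilde\gamma_{\delta,r}-1)\geq 3=v_2(\log_2\alpha^{12})$, which your mod-$8$ check guarantees; and the final numerical bound $295$ is an outcome of the computation (the paper finds $R_{\max}=292$), not something that can be asserted in advance.
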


\begin{proof}
We use a $p$-adic reduction method that Pink and Ziegler \cite[section 7.2]{Pink_Ziegler} used to resolve Diophantine equations of the form $u_n+u_m=wp_1^{z_1}\dots p_s^{z_s}$. The reduction method is based on an algorithm due to Peth\H{o} and de Weger \cite[Algorithm A]{Pethoe_deWeger}.

We recall inequality \eqref{eq:Bugeaud_abschaetzung} from the proof of Proposition~\ref{pro:Bugeaud}:
\begin{equation*}\label{eq:pink_1}
	a_5 \leq v_2\left(
		\left(\frac{\alpha}{\beta}\right)^m-
		\frac{\beta^{n-m}+1}{\alpha^{n-m}+1}
	\right)+1.
\end{equation*}
Since $v_2\left(\frac{\alpha^{n-m}+1}{\beta^{n-m}+1}\right)=0$, we have
\[
	v_2\left(
		\left(\frac{\alpha}{\beta}\right)^m-
		\frac{\beta^{n-m}+1}{\alpha^{n-m}+1}
	\right)
	=
	v_2\left(
		\frac{\alpha^{n-m}+1}{\beta^{n-m}+1}		
		\left(\frac{\alpha}{\beta}\right)^m
		-1
	\right).
\]
We set 
\[
	t:=n-m
	\quad \mbox{and} \quad
	\tau(t):=\frac{\alpha^t+1}{\beta^t+1}.
\]
With this notation we have
\[
	a_5-1
	\leq	
	v_2\left(
		\tau(t)\left(\frac{\alpha}{\beta}\right)^m
		-1
	\right).
\]
The aim is to find an upper bound for the expression on the right hand side. Therefore we consider each possible $t=n-m=2,3,\dots , 470$.

The idea is to apply the $2$-adic logarithm in order to get rid of $m$ as an exponent. 
Note that the $p$-adic logarithm (defined on the non-zero complex $p$-adic numbers)
has the standard property $\log_p(x^m)=m \log_p(x)$. Moreover, if $v_p(x)>1$, then $v_p\left(\log_p(1+x)\right)=v_p(x)$ (see e.g. \cite[II.2.4]{Smart}).

If 
$
	v_2\left(\tau(t)\left(\frac{\alpha}{\beta}\right)^m -1 \right)
	\leq 1
$,
then $a_5\leq 2$ and we are done. 

Assume that 
$
	v_2\left(\tau(t)\left(\frac{\alpha}{\beta}\right)^m -1 \right)>1.
$
Then we can  use the properties of the $p$-adic logarithm:
\begin{align}\label{eq:pink_2}
	a_5-1 
	\leq	
	v_2\left(
		\tau(t)\left(\frac{\alpha}{\beta}\right)^m
		-1
	\right)
	&=
	v_2\left(
		\log_2 \left(
		\tau(t)\left(\frac{\alpha}{\beta}\right)^m
		\right)
	\right)\\
	&=
	v_2\left(
		\log_2  \tau(t) 
		-
		m \cdot \log_2 \left(\beta/\alpha\right)
	\right) \nonumber \\
	&=
	v_2\left(\log_2  \tau(t) \right)
	+
	v_2\left(
		1
		-
		m \cdot 
		\frac{
			\log_2 \left(\beta/\alpha\right)}
	{\log_2 \tau(t) }
	\right), \nonumber
\end{align}
where in the last step we multiplied and divided the expression by $\log_2\tau(t)$.
Note that if $\log_2\tau(t)$ were zero, the case would have to be treated separately, but in our computations this does not occur.
With the help of Sage \cite{sagemath} we compute (for each $t$) the expression
\[
	\zeta(t) := \frac{\log_2\tau(t)}{\log_2(\beta/\alpha)}
	= u_0+u_1\cdot 2 + u_2 \cdot 2^2 + \dots
	\quad \in \Q_2.
\]
Indeed, $\zeta(t)$ lies in $\Q_2$ for all $t$. 
This is because $\alpha$ and $\beta$ are conjugate in $\Q(\sqrt{5})$ and therefore $\log_2\tau(t) \in \sqrt{5}\Q_2$ and $\log_2(\beta/\alpha)\in \sqrt{5}\Q_2$ and so the quotient lies in $\Q_2$. It is, however, not obvious why $v_2(\zeta(t))\geq 0$. Yet, it turns out, that all considered $\zeta$'s are of the above form.

We choose $r$ smallest possible such that $2^r>M=1.54\cdot 10^{85}$, that is, $r=283$. We choose the unique integer $0\leq m_0 < 2^r$ that fulfills $m_0 \equiv \zeta \pmod{2^r}$, i.e. $m_0=u_0 + u_1 \cdot 2 + u_2 \cdot 2^2 + \dots + u_{282} \cdot 2^{282}$. 
 
Since $m<M<2^r$, by construction of $m_0$ we have
\begin{align*}
	a_5-1 
	&\leq
	v_2\left(\log_2 \left( \tau(t) \right)\right)
	+
	v_2\left(
		1
		-
		m \cdot 
		\frac{
			\log_2 \left(\beta/\alpha\right)}
	{\log_2 \left( \tau(t) \right)}
	\right)\\
	&\leq
	v_2\left(\log_2 \left( \tau(t) \right)\right)
	+
	v_2\left(
		1
		-
		m_0 \cdot 
		\frac{
			\log_2 \left(\beta/\alpha\right)}
	{\log_2 \left( \tau(t) \right)}
	\right).		
\end{align*}
Next, we let $R$ be the smallest index $\geq r$ such that $u_R\neq 0$. Note that in general such an integer does not necessarily exist. These cases can be treated separately but in our computations we always find such an integer.
Then we have
\begin{align*}
	a_5-1 
	&\leq
	v_2\left(\log_2 \left( \tau(t) \right)\right)
	+
	v_2\left(
		1
		-
		m_0 \cdot 
		\frac{
			\log_2 \left(\beta/\alpha\right)}
	{\log_2 \left( \tau(t) \right)}
	\right)\\
	&=
	v_2\left(\log_2 \left( \tau(t) \right)\right)
	+
	v_2\left(
		1
		-
		(\zeta - u_R\cdot2^R - u_{R+1}\cdot2^{R+1} - \dots)
		\cdot 
		\frac{
			\log_2 \left(\beta/\alpha\right)}
	{\log_2 \left( \tau(t) \right)}
	\right)\\
	&=
	v_2\left(\log_2 \left( \tau(t) \right)\right)
	+
	v_2\left(
		(u_R\cdot2^R+u_{R+1}\cdot2^{R+1}+\dots) \cdot 
		\frac{
			\log_2 \left(\beta/\alpha\right)}
	{\log_2 \left( \tau(t) \right)}
	\right)\\
	&=
	v_2\left(\log_2 \left( \tau(t) \right)\right)
	+
	R
	+
	v_2\left(
			\log_2 \left(\beta/\alpha\right)
			\right)
	-
	v_2\left(
			\log_2 \left(\tau(t)\right)
			\right)\\
	&=
	R
	+
	v_2\left(
			\log_2 \left(\beta/\alpha\right)
			\right)\\
	&=R+2
	.
\end{align*}
Using Sage \cite{sagemath} we run the computations for $t=2,3,\dots,470$. The largest $R$ to appear is $R_{max}=292$, therefore we obtain the bound $a_5\leq R_{max}+3=295$.
The computations took only a couple of seconds on a usual PC.
\end{proof}

\begin{proof}[Proof of Theorem \ref{thm:main}]
All the way (in Sections \ref{sec:largeUpperBound} and \ref{sec:reducing_bound}) we assumed that $n>1000$.
In Proposition~\ref{pro:Pethoe-DeWeger} we obtained $a_5\leq295$ and in Proposition~\ref{pro:Baker-Davenport} we obtained $a_1-a_5\leq 321$. Combined, these two results yield $a_1<616$. From inequality \eqref{eq:ineq_2hocha_groesser_alphahochn}
it follows that $n \log \alpha -\log 6 < a_1 \log 2$
and so $n<\frac{a_1 \log 2 + \log 6}{\log \alpha}<892$, which is a contradiciton to $n>1000$. Thus we must have $n\leq 1000$, which means that the solutions of Problem~\ref{probl:diophantische_gleichung} are exactly the solutions found in Proposition~\ref{pro:loesungen}.
\end{proof}

\section{Appendix - List of solutions for Problem \ref{probl:diophantische_gleichung}}
The solutions for Diophantine Equation~\eqref{eq:main} in Problem~\ref{probl:diophantische_gleichung} are displayed below.

\begingroup
	\allowdisplaybreaks
\begin{align*}
	&F_{ 9 }+F_{  7 }= 2^{ 5 }+ 2^{ 3 }+2^{ 2 }+2^{ 1 }+2^{ 0 }= 47 ,\\
&F_{ 11 }+F_{  3 }= 2^{ 6 }+ 2^{ 4 }+2^{ 3 }+2^{ 1 }+2^{ 0 }= 91 ,\\
&F_{ 11 }+F_{  5 }= 2^{ 6 }+ 2^{ 4 }+2^{ 3 }+2^{ 2 }+2^{ 1 }= 94 ,\\
&F_{ 11 }+F_{  8 }= 2^{ 6 }+ 2^{ 5 }+2^{ 3 }+2^{ 2 }+2^{ 1 }= 110 ,\\
&F_{ 12 }+F_{  7 }= 2^{ 7 }+ 2^{ 4 }+2^{ 3 }+2^{ 2 }+2^{ 0 }= 157 ,\\
&F_{ 12 }+F_{  10 }= 2^{ 7 }+ 2^{ 6 }+2^{ 2 }+2^{ 1 }+2^{ 0 }= 199 ,\\
&F_{ 13 }+F_{  2 }= 2^{ 7 }+ 2^{ 6 }+2^{ 5 }+2^{ 3 }+2^{ 1 }= 234 ,\\
&F_{ 13 }+F_{  4 }= 2^{ 7 }+ 2^{ 6 }+2^{ 5 }+2^{ 3 }+2^{ 2 }= 236 ,\\
&F_{ 13 }+F_{  6 }= 2^{ 7 }+ 2^{ 6 }+2^{ 5 }+2^{ 4 }+2^{ 0 }= 241 ,\\
&F_{ 14 }+F_{  8 }= 2^{ 8 }+ 2^{ 7 }+2^{ 3 }+2^{ 2 }+2^{ 1 }= 398 ,\\
&F_{ 14 }+F_{  11 }= 2^{ 8 }+ 2^{ 7 }+2^{ 6 }+2^{ 4 }+2^{ 1 }= 466 ,\\
&F_{ 15 }+F_{  2 }= 2^{ 9 }+ 2^{ 6 }+2^{ 5 }+2^{ 1 }+2^{ 0 }= 611 ,\\
&F_{ 15 }+F_{  4 }= 2^{ 9 }+ 2^{ 6 }+2^{ 5 }+2^{ 2 }+2^{ 0 }= 613 ,\\
&F_{ 15 }+F_{  6 }= 2^{ 9 }+ 2^{ 6 }+2^{ 5 }+2^{ 3 }+2^{ 1 }= 618 ,\\
&F_{ 15 }+F_{  10 }= 2^{ 9 }+ 2^{ 7 }+2^{ 4 }+2^{ 3 }+2^{ 0 }= 665 ,\\
&F_{ 16 }+F_{  5 }= 2^{ 9 }+ 2^{ 8 }+2^{ 7 }+2^{ 6 }+2^{ 5 }= 992 ,\\
&F_{ 16 }+F_{  14 }= 2^{ 10 }+ 2^{ 8 }+2^{ 6 }+2^{ 4 }+2^{ 2 }= 1364 ,\\
&F_{ 17 }+F_{  6 }= 2^{ 10 }+ 2^{ 9 }+2^{ 6 }+2^{ 2 }+2^{ 0 }= 1605 ,\\
&F_{ 17 }+F_{  7 }= 2^{ 10 }+ 2^{ 9 }+2^{ 6 }+2^{ 3 }+2^{ 1 }= 1610 ,\\
&F_{ 17 }+F_{  8 }= 2^{ 10 }+ 2^{ 9 }+2^{ 6 }+2^{ 4 }+2^{ 1 }= 1618 ,\\
&F_{ 18 }+F_{  2 }= 2^{ 11 }+ 2^{ 9 }+2^{ 4 }+2^{ 3 }+2^{ 0 }= 2585 ,\\
&F_{ 18 }+F_{  3 }= 2^{ 11 }+ 2^{ 9 }+2^{ 4 }+2^{ 3 }+2^{ 1 }= 2586 ,\\
&F_{ 18 }+F_{  7 }= 2^{ 11 }+ 2^{ 9 }+2^{ 5 }+2^{ 2 }+2^{ 0 }= 2597 ,\\
&F_{ 18 }+F_{  12 }= 2^{ 11 }+ 2^{ 9 }+2^{ 7 }+2^{ 5 }+2^{ 3 }= 2728 ,\\
&F_{ 19 }+F_{  2 }= 2^{ 12 }+ 2^{ 6 }+2^{ 4 }+2^{ 2 }+2^{ 1 }= 4182 ,\\
&F_{ 19 }+F_{  5 }= 2^{ 12 }+ 2^{ 6 }+2^{ 4 }+2^{ 3 }+2^{ 1 }= 4186 ,\\
&F_{ 19 }+F_{  8 }= 2^{ 12 }+ 2^{ 6 }+2^{ 5 }+2^{ 3 }+2^{ 1 }= 4202 ,\\
&F_{ 20 }+F_{  8 }= 2^{ 12 }+ 2^{ 11 }+2^{ 9 }+2^{ 7 }+2^{ 1 }= 6786 ,\\
&F_{ 20 }+F_{  17 }= 2^{ 13 }+ 2^{ 7 }+2^{ 5 }+2^{ 3 }+2^{ 1 }= 8362 ,\\
&F_{ 20 }+F_{  18 }= 2^{ 13 }+ 2^{ 10 }+2^{ 7 }+2^{ 2 }+2^{ 0 }= 9349 ,\\
&F_{ 22 }+F_{  2 }= 2^{ 14 }+ 2^{ 10 }+2^{ 8 }+2^{ 5 }+2^{ 4 }= 17712 ,\\
&F_{ 22 }+F_{  8 }= 2^{ 14 }+ 2^{ 10 }+2^{ 8 }+2^{ 6 }+2^{ 2 }= 17732 ,\\
&F_{ 22 }+F_{  11 }= 2^{ 14 }+ 2^{ 10 }+2^{ 8 }+2^{ 7 }+2^{ 3 }= 17800 ,\\
&F_{ 22 }+F_{  13 }= 2^{ 14 }+ 2^{ 10 }+2^{ 9 }+2^{ 4 }+2^{ 3 }= 17944 ,\\
&F_{ 22 }+F_{  16 }= 2^{ 14 }+ 2^{ 11 }+2^{ 8 }+2^{ 3 }+2^{ 1 }= 18698 ,\\
&F_{ 23 }+F_{  8 }= 2^{ 14 }+ 2^{ 13 }+2^{ 12 }+2^{ 2 }+2^{ 1 }= 28678 ,\\
&F_{ 23 }+F_{  10 }= 2^{ 14 }+ 2^{ 13 }+2^{ 12 }+2^{ 5 }+2^{ 3 }= 28712 ,\\
&F_{ 23 }+F_{  12 }= 2^{ 14 }+ 2^{ 13 }+2^{ 12 }+2^{ 7 }+2^{ 0 }= 28801.
\end{align*}
\endgroup

\bibliographystyle{abbrv}
\bibliography{Literatur_Dioph}
\end{document}